\theoremstyle{definition}
\newtheorem{theo}{Theorem}[subsection]
\newtheorem{pr}[theo]{Proposition}
 \newtheorem{lem}[theo]{Lemma}
\theoremstyle{remark}
\newtheorem{rema}[theo]{Remark}
\newtheorem{defi}[theo]{Definition}
\newcommand\cu{\underline{C}}
\newcommand\du{\underline{D}}
\newcommand\eu{\underline{E}}
\newcommand\bu{\underline{B}}
\newcommand\hw{\underline{Hw}}
\newcommand\ab{\underline{\operatorname{Ab}}}
\DeclareMathOperator\prli{\varprojlim}
\DeclareMathOperator\inli{\varinjlim}
\newcommand\spe{\operatorname{Spec}}
\newcommand\afo{\mathbb{A}^1}
\newcommand\p{{\mathbb{P}}}
\newcommand\z{{\mathbb{Z}}}
\newcommand\q{{\mathbb{Q}}}
\newcommand\cp{\mathcal{P}}
\newcommand\zop{{\mathbb{Z}[1/p]}}
\newcommand\ns{\{0\}}
\newcommand\sv{\operatorname{SmVar}}
\newcommand\spv{\operatorname{SmPrVar}}
\newcommand\chow{\operatorname{Chow}}
\newcommand\chowe{\chow^{eff}}
\newcommand\dmger{\dm^{eff}_{gm,R}}
\newcommand\dmge{DM^{eff}_{gm}}
\newcommand\dmgm{DM_{gm}}
\newcommand\dm{\operatorname{DM}}
\newcommand\dmer{{\dm^{eff}_{R}}}
\newcommand\dmez{\dm^{eff}_{\z}{}}
\newcommand\dmerpp{\dm^{eff}_{R'}{}'}
\newcommand\dmb{\dm^{bir}_{R}{}}
\newcommand\mg{M_{gm}}
\newcommand\dmr{\dm_{R}}
\newcommand\mgr{{\mathcal{M}}_R}
\newcommand\mgz{{\mathcal{M}}_{\z}}
\newcommand\smc{\operatorname{SmCor}}
\newcommand\sscr{\operatorname{Sh}_{Nis}(\smc,R)}
\newcommand\sscrp{\operatorname{Sh}_{Nis}(\smc,R')}
\newcommand\pscr{\operatorname{PreSh}_{Nis}(\smc,R)}
\newcommand\zmv{\operatorname{Zar-MV}}
\newcommand\nmv{\operatorname{Nis-MV}}
\newcommand\smcrpl{ \smc_R^{\oplus}} %\newcommand\smcrpl{ \smc_R^{\coprod}}??
\newcommand\id{\operatorname{id}}
\newcommand\mgrp{\mathcal{M}_{R'}}
\newcommand\chower{\chow^{eff}_R}
\newcommand\chowerp{\chow^{eff}_{R'}}
\newcommand\chowr{\chow_R}
\newcommand\mgb{\mathcal{M}^{bir}_{R}}
\newcommand\dmgb{DM^{bir}_{gm,R}}
\newcommand\chowb{\chow^{bir}_R}
\newcommand\wchows{w_{\chow}^{eff}}
\newcommand\wchow{w_{\chow}}
\newcommand\wbir{w_{R}^{bir}}
\newcommand\rtr{{R}_{tr}}
\newcommand\obj{\operatorname{Obj}}
 \newcommand\lan{\langle}
\newcommand\ra{\rangle}
\newcommand\perpp{{}^{\perp}}
\DeclareMathOperator\kar{\operatorname{Kar}}
\DeclareMathOperator\co{\operatorname{Cone}}
\numberwithin{equation}{subsection}
\begin{document}

\title{On Chow weight structures without projectivity and resolution of singularities } 
 \author{Mikhail V. Bondarko, David Z. Kumallagov   \thanks{ %%!!!!
 The work of the first author on sections  1, 2.2, and 3.1 of this paper was supported by the Russian Science Foundation grant No. 16-11-10200.}}

\maketitle
\begin{abstract}
In this paper certain Chow weight structures on the "big" triangulated motivic categories  $\dmer\subset \dmr$ are defined in terms of motives of all smooth varieties over the base field. This definition allows studying basic properties of these weight structures without applying resolution of singularities; thus we don't have to assume that the coefficient ring  $R$ contains $1/p$ in the case where the characteristic $p$ of the base field is positive. Moreover, in the case where $R$ satisfies the latter assumption  our weight structures %coincide with the weight structures 
 are "compatible" with the weight structures that were defined  in previous papers in terms of Chow motives; it follows that a motivic complex has non-negative weights if and only if its positive Nisnevich hypercohomology vanishes. The results of this article yield certain Chow-weight filtration (also) on $p$-adic cohomology of motives and smooth varieties. 
\end{abstract}

\tableofcontents

\section*{Introduction}

This paper is dedicated to the study of a new definition of Chow weight structures for Voevodsky motives over an arbitrary perfect field $k$ of characteristic  $p$. This definition  does not depend on the existence of nice compactifications for smooth varieties (and other resolution of singularities results); this allows treating  $R$-linear versions of these weight structures  (on the triangulated categories $\dmer\subset \dmr$) also in the case where  $p$ is positive and not invertible in the coefficient ring $R$.

Now recall that Chow weight structures yield %motivic certain 
 analogues of Deligne's weights  (as described for mixed Hodge structures in  \cite{delh2} and for mixed \'etale sheaves in \cite{delw2}) for various triangulated categories of Voevodsky motives. For motives over a field certain Chow weight structures were described in  \cite{bws} and \cite{bzp} (the latter paper treated the case  $p>0$). In these articles %papers %работах Бондарко 
 the fact that the categories of geometric (i.e., compact) motives are generated by their subcategories of Chow motives (i.e., by $\chowe$ and $\chow$, respectively) was applied. It yields the existence of bounded weight structures on the categories $\dmge$ and $\dmgm$ of geometric motives; their hearts consist of the corresponding categories of Chow motives. Moreover, these weight structures can be extended to the corresponding "big" motivic categories (that are compactly generated by their subcategories of geometric motives; cf. Proposition 1.7 and \S2.1 of \cite{binters} or \S2.3 of  \cite{bkl}). In contrast to Deligne's weights, this gave Chow weight structures for 
$\zop$-linear motives (in the case $p>0$ we set $\zop=\z$); respectively, %they yield
 there exist  weight filtrations and spectral sequences for all $\zop$-linear (co)homology of motives. Moreover, the theory of weight structures gives non-trivial functoriality properties of these matters. 
Similarly to several other properties of motives, the construction of weight structures in the aforementioned papers relied on the resolution of singularities (i.e., on the Hironaka theorem in the case $p=0$ and on the Gabber's resolution of singularities in the setting of $\zop$-linear motives for $p>0$). 

In the current paper a new construction method is applied; it gives a certain Chow weight structure  $\wchows$  "directly" %???
 on the category  $\dmer=\dmer(k)$ of unbounded $R$-linear motivic complexes,  %(и ее локализациях $\dmelam$): 
 where $R$ is an arbitrary (commutative unital) coefficient ring; this weight structure is generated by motives of all smooth varieties over $k$.\footnote{Weight structures generated by sets of compact objects were constructed in   \cite{paucomp};  their properties were studied in detail in \cite{bpure}. It is also worth noting that in several papers  (starting from \cite{wild}) J. Wildeshaus has used motives of non-proper $k$-varieties for the  construction  %and studied 
 of certain motives in the heart of the Chow weight structure.} %non-smooth???
 This definition (in contrast to earlier ones based on smooth projective varieties only) does not depend on any resolution of singularities results; so it also "works fine" for $\z$-linear motives over any perfect field $k$ of characteristic $p>0$; see Remark  \ref{rnewoldchow}(4) below. A disadvantage of this method is that it does not yield that $\wchows$ is generated by Chow motives and restricts to the subcategory $\dmger(k)$ of geometric motives. Still we successfully establish several other properties of $\wchows$ that are similar to the main properties of the Chow weight structures defined earlier.  In particular, we prove that  $\wchows$ can be naturally extended to a weight structure $\wchow$ on the so-called stable motivic category $\dmr$ (that contains $\dmer$).

Now we describe the contents of the paper; some  more information of this sort can also be found in the beginnings of sections.

In \S\ref{sprel} we give some categorical notation and definitions, and recall the basics of the theory of weight structures (on compactly generated triangulated categories). The most complicated part of the section is the recollection of the properties of the category $\dmer$ (or $R$-linear motivic complexes) for an arbitrary coefficient ring $R$; since the existing literature is mostly dedicated to the case  $R\subset \q$, we are forced to compare different definitions of $\dmer$. Note here that in the ("basic") cases  $R=\z$ and $R=\zop$ those arguments of our papers that concern "general" properties of motives can be significantly simplified (for instance, one may apply the results of  \cite{deg} and \cite{kel}).

In \S\ref{swchows} our main weight structure  $\wchows$ on the category $\dmer$ is defined; we prove some of its properties. In particular, we prove that 
 $\wchows$ is generated by $R$-linear Chow motives if either $p$ is zero or $p$ is invertible in $R$. Thus we obtain the compatibility of the Chow weight structures defined earlier with $\wchows$; it follows that a motivic complex has non-negative weights if and only if its positive Nisnevich hypercohomology vanishes.
Moreover, all the scalar extension functors  $-\otimes^{mot}_R{R'}$ (where $R'$ is a commutative unital $R$-algebra) are weight-exact. 

In  \S\ref{swchowo} we prove that  $\wchows$ naturally "induces"  certain weight structures on the categories of stable and birational motives (i.e., we consider a compactly generated category $\dmr$ on which the action of the Tate twist  $-\lan 1 \ra=-(1)[2]$ is invertible, and the "birational" localization $\dmer/\dmer\lan 1\ra$). These statements are similar to the corresponding properties of Chow weight structures that were established in previous papers as well. 

The authors are deeply grateful to the referee for his very helpful comments.

\section{Preliminaries}\label{sprel}

In \S\ref{snotata}   we introduce some  definitions and notation that are mostly related to triangulated categories; we also prove an easy lemma.

%вряд ли является новым, но???

In \S\ref{sws} we recall the basics of the theory of weight structures. % that we will need below.

In \S\ref{smot} we briefly recall the basics on (unbounded) Voevodsky motivic complexes with coefficients in an arbitrary ring $R$. %Our main source for the category   $\dmer$ is \cite{bev}; we
 We  also prove some properties of the motivic extension of scalars functors.

\subsection{Categorical definitions and notation}\label{snotata}

%\qquad 
\begin{itemize}

\item Given a category $\bu$ and  $M,N\in\obj \bu$, we say that $M$ is a {\it
retract} of $N$ %(and $Y$ is {\it coretract} of $X$)
 if $\id_M$ can be %factorized as $X\stackrel{i}{\to} Y\stackrel{p}{\to}X$
 factored through $N$ (recall that if $\bu$ is triangulated then $M$ is a retract of $N$ if and only if $M$ is its direct summand).

\item A subcategory $\du$ of  $\bu$ is said to be  {\it Karoubi-closed}
  in $\bu$ if it contains all $\bu$-retracts of its objects.

\item The full subcategory $\kar_{\bu}(\du)$ of $\bu$ whose objects are all $\bu$-retracts of objects of $\du$  will be called the {\it Karoubi-closure} of $\du$ in $\bu$. It is easily seen that $\kar_{\bu}(\du)$ is Karoubi-closed in   $\du$; if  $\bu$ and $\du$ are additive then  $\kar_{\bu}(\du)$ is additive as well. 

\item We will say that an additive category $\du$ is  {\it Karoubian} if  any its idempotent endomorphism is isomorphic to the composition of a retraction and a coretraction of the type $M\bigoplus N\to M\to M\bigoplus N$.

\item The symbol $\cu$ below will always denote some triangulated category.
For a given class  $\cp\subset \obj \cu$ we will write $\lan \cp\ra$ for the smallest full  Karoubi-closed triangulated subcategory $\du$ of $\cu$ such that $\cp\subset \obj \du$.

\item For any  $A,B,C \in \obj\cu$ we will say that $C$ is an {\it extension} of $B$ by $A$ if there exists a distinguished triangle $A \to C \to B \to A[1]$.
A class $\cp\subset \obj \cu$ is said to be  {\it extension-closed}    if it is closed with respect to extensions and contains $0$.

\item The  smallest extension-closed Karoubi-closed  class $\cp'\subset \obj \cu$ containing $\cp$ will be called the {\it envelope} of $\cp$.

\item For $M,N\in \obj \cu$ we will write $M\perp N$ if $\cu(M,N)=\ns$. For $D,E\subset \obj \cu$ we write $D\perp E$ if $M\perp N$ for all $M\in D,\ N\in E$.

\item Given $\cp\subset\obj \cu$ we  will write $\cp^\perp$ for the class $$\{N\in \obj \cu:\ M\perp N\ \forall M\in \cp\}.$$
%Sometimes we will denote by $D^\perp$ the corresponding full subcategory of $\cu$. 
Dually, $\perpp \cp=\{M\in \obj \cu:\ M\perp N\ \forall N\in \cp\}$.

%???\item Мы будем называть функторы, сохраняющие структуры триангулированной категории, {\it точными}.
\item\label{idloc} Assume that $\cu$ is closed with respect to (small) coproducts (we only consider small coproducts in this paper). 
 For $\du\subset \cu$ ($\du$ is a triangulated category that may be equal to $\cu$)
  one says that %some  objects $D_i$ of $ \du$ () a class  
	$\cp$ generates $\du$ {\it as a localizing subcategory} of $\cu$ if  
$\du$ is the smallest full strict triangulated subcategory of $\cu$ that contains $\cp$ %and has coproducts. 
and is closed with respect to  $\cu$-coproducts.

\item $M\in\obj\cu$ is said to be {\it compact} if the functor  $\cu(M,-):\cu\to \ab$ respects coproducts.  

\item $\cu$ is said to be {\it compactly generated} if it is generated by a set of compact objects as its own localizing subcategory. 
\end{itemize}

We will sometimes need the following properties of compactly generated triangulated categories.

\begin{lem}\label{lcg}
Assume that the category  $\cu$ is %compactly generated by the object set of its small 
 generated as its own localizing subcategory by  objects of its triangulated subcategory $\cu'$, %стабильной относительно сдвигов??
the objects of $\cu'$ are compact in $\cu$, and  $F:\cu\to \du$ is an exact functor (so, the category $\du$ is triangulated) that respects coproducts.

1. If $\cu'$ is small then there exists an exact functor $G:\du\to \cu$ right adjoint to $F$. 

2. Assume that the restriction of  $F$ to $\cu'$ is a full embedding that sends objects of  $\cu'$ into compact objects of  $\du$. Then  $F$ is fully faithful. 

Moreover, if the class $F(\obj \cu')$ generates  $\du$  as its own localizing subcategory then  $F$ is an equivalence of categories. 

3. Let $E$ be a set  of objects of $ \obj \cu'$ , %$\eu'=\lan D\ra$, а 
 and denote by $\eu$  the  localizing subcategory of  $\cu$ generated by $E$. Then the localization $\cu/\eu$ exists  (i.e., the morphism classes in  $ \cu/\eu$ are sets), and it is closed with respect to coproducts. Moreover, the localization functor  $L:\cu\to \cu/\eu$ %переводит компактные объекты в компактные,  %Кроме того,  
respects coproducts and induces an equivalence of $\kar(\kar(\cu')/\eu')$ with the full subcategory of compact objects of  $\cu/\eu$, and the class $L(\obj \cu')$ generates  $\cu/\eu$ as its own localizing subcategory. 
\end{lem}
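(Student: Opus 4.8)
The plan is to handle the three parts in order, with the Brown representability theorem for the compactly generated category $\cu$ doing most of the work. For part 1, I would fix $Y\in\obj\du$ and consider the contravariant functor $H_Y=\du(F(-),Y)\colon\cu\to\ab$. Since $F$ is exact and respects coproducts, $H_Y$ is cohomological and converts $\cu$-coproducts into products. As $\cu$ is generated as its own localizing subcategory by the \emph{set} $\obj\cu'$ of compact objects (this is where the smallness of $\cu'$ is used), Brown representability applies and produces an object $G(Y)\in\obj\cu$ together with a natural isomorphism $\cu(-,G(Y))\cong H_Y$. The assignment $Y\mapsto G(Y)$ is then functorial and right adjoint to $F$; being right adjoint to an exact functor between triangulated categories, $G$ is automatically exact.

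For part 2, I would prove full faithfulness by a two-step generation argument. First fix $X\in\obj\cu'$: both $\cu(X,-)$ and $\du(F(X),F(-))$ are cohomological functors $\cu\to\ab$ respecting coproducts — the former because $X$ is compact in $\cu$, the latter because $F(X)$ is compact in $\du$ and $F$ respects coproducts. By hypothesis these functors agree on $\obj\cu'$, so the full subcategory of those $Y$ for which the natural map $\cu(X,Y)\to\du(F(X),F(Y))$ is an isomorphism is localizing and contains $\cu'$, hence equals $\cu$. Next fix an arbitrary $Y\in\obj\cu$; the contravariant functors $\cu(-,Y)$ and $\du(F(-),F(Y))$ both send coproducts to products and agree on $\obj\cu'$ by the previous step, so the same closure argument shows they coincide on all of $\cu$. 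This yields that $F$ is fully faithful. For the final clause, the essential image of a fully faithful exact coproduct-preserving functor is a localizing subcategory of $\du$; as it contains $F(\obj\cu')$, which generates $\du$, it is all of $\du$, and a fully faithful essentially surjective functor is an equivalence.

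For part 3, the plan is to invoke Bousfield--Neeman localization theory. Since $E$ is a set of compact objects, Brown representability gives a right adjoint to the inclusion $\eu\hookrightarrow\cu$, so $\eu$ is a Bousfield-localizing subcategory; this furnishes the Verdier quotient $\cu/\eu$ with small morphism classes, closed with respect to coproducts, and with $L$ respecting coproducts. The identification of compact objects is then the Thomason--Neeman localization theorem: the compact objects of $\cu$ coincide with $\kar(\cu')$, the compact objects of $\eu$ form the Karoubi-closed triangulated subcategory $\eu'=\lan E\ra$ of $\kar(\cu')$ generated by $E$, and the theorem produces an equivalence of $\kar(\kar(\cu')/\eu')$ onto the subcategory of compact objects of $\cu/\eu$ induced by $L$. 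Finally $L(\obj\cu')$ generates $\cu/\eu$ as a localizing subcategory because $L$ is coproduct-preserving and essentially surjective onto a coproduct-dense subcategory, hence carries generators to generators. I expect the main obstacle to lie here in part 3, specifically in the idempotent-completion bookkeeping — matching the genuine compact objects of $\cu/\eu$ with the Karoubization $\kar(\kar(\cu')/\eu')$ of the naive quotient of compacts — and in verifying carefully the hypotheses of Neeman's theorem (that $E$ consists of compact generators of $\eu$); parts 1 and 2 are routine once Brown representability is available.
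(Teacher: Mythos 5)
Your proposal is correct and follows essentially the same route as the paper: Brown representability for the adjoint in part 1, the two-step coproduct-closure/dévissage argument (first varying the target over a localizing subcategory containing $\obj \cu'$, then the source) for full faithfulness in part 2, and the Bousfield--Neeman localization machinery together with the Thomason--Neeman theorem on compact objects of quotients for part 3. The paper simply delegates parts 1 and 3 to the corresponding results of Neeman's book (Theorems 8.4.4, 8.3.3, 4.4.9, Lemma 5.3.6, Proposition 9.1.19) and to \cite{bos}, and part 2's last clause to \cite[Proposition 1.1.4]{bpure}, exactly the statements you invoke.
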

\begin{proof}
1. See Theorem  8.4.4 and Lemma 5.3.6 of \cite{neebook}. % 8.3.3

2. Since the objects of $\cu'$ are compact in $\cu$ and their images are compact in  $\du$, the class $C_1$ of those $N\in \obj\cu$ such that the maps $\cu(M,N)\to \du(F(M),F(N))$ are bijective for all  $M\in \obj \cu'$, is closed with respect to coproducts. %?????? It 
 $C_1$ is also shift-invariant (since $\obj \cu'[1]=\obj\cu'$); hence %??!!
it is also extension-closed. Since $C_1$  contains $\obj \cu'$, %this class 
we obtain $C_1=\obj\cu$. Next, the class $C_2$ of those  $M\in \obj\cu$ such that the maps $\cu(M,N)\to \du(F(M),F(N))$ are bijective for all  $N\in \obj \cu$ is obviously closed with respect to coproducts, shifts, and extensions (here we apply the assumption that  $F$ respects coproducts once again). As we have just proved, %this latter class 
 $C_2$ contains $\obj \cu'$;  thus it coincides with $\obj \cu$, i.e., $F$ is fully faithful.

The second part of the assertion easily follows from Proposition  1.1.4 of \cite{bpure}.

3. These statements also follow from the results of  \cite{neebook} easily; indeed, the easy arguments described in the proof of \cite[Proposition 4.3.1.3(III.1--2)]{bos} demonstrate that they follow from  Theorem 8.3.3,  Proposition 9.1.19, and Theorem 4.4.9 of \cite{neebook}.
\end{proof}
%\end{comment}

Now we introduce some "geometric" notation.

Our base field will be denoted by $k$; we assume that it is perfect (and fixed). We will write $p$ for its characteristic  ($p$ may equal $0$). Moreover, if $p=0$ then the %notation 
 symbol  $\zop$ will denote the ring $\z$.
  %Обозначим через 
 
$\sv$ is the set of smooth (not necessarily connected) $k$-varieties.

We will use the notation $R$ for the "main" coefficient ring for motives in this paper; $R$ will always be a commutative associative unital ring.

\subsection{Weight structures}\label{sws}

Recall that  $\cu$ will always denote some triangulated category in the current paper.

\begin{defi}\label{dwstr}

 A couple of subclasses $\cu_{w\le 0}$ and $ \cu_{w\ge 0}\subset\obj \cu$ %(of {\it $w$-negative} and {\it $w$-positive} objects, respectively)
will be said to define a {\it weight structure} $w$ for a triangulated category  $\cu$ if they  satisfy the following conditions.

(i) $\cu_{w\le 0}$ and $\cu_{w\ge 0}$ are %additive and 
Karoubi-closed in $\cu$
(i.e., contain all $\cu$-retracts of their elements).

(ii) {\bf Semi-invariance with respect to translations.}

$\cu_{w\le 0}\subset \cu_{w\le 0}[1]$ and $\cu_{w\ge 0}[1]\subset
\cu_{w\ge 0}$.

(iii) {\bf Orthogonality.}

$\cu_{w\le 0}\perp \cu_{w\ge 0}[1]$.

(iv) {\bf Weight decompositions}.

 For any $M\in\obj \cu$ there
exists a distinguished triangle
%\begin{equation}\label{wd}
$$LM\to M\to RM {\to} LM[1]$$
%\end{equation} 
such that $LM\in \cu_{w\le 0} $ and $ RM\in \cu_{w\ge 0}[1]$.
\end{defi}

We will also need the following definitions.

\begin{defi}\label{dwso}
%Let $i,j\in \z$
\begin{enumerate}
\item
The full subcategory  $\hw\subset \cu$ whose object class is $\cu_{w=0}=\cu_{w\ge 0}\cap \cu_{w\le 0}$ 
 is called the {\it heart} of  $w$.

\item For $i\in \z$ we will use the notation $\cu_{w\ge i}$ (resp. $\cu_{w\le i}$, resp. $\cu_{w= i}$)  for the class  $\cu_{w\ge 0}[i]$ (resp. $\cu_{w\le 0}[i]$,  $\cu_{w= 0}[i]$).

\item We will say that a weight structure  $w$ is {\it generated} by a class $\cp\subset \obj \cu$ if $\cu_{w\ge 0}=(\cup_{i>0} \cp[-i])\perpp$. 

\item\label{id5} Let $\cu'$  be a triangulated category endowed with a weight structure  $w'$; let $F:\cu\to \cu'$ be an exact functor.

We will say that $F$ is {\it left weight-exact} (with respect to $w,w'$) if it maps $\cu_{w\le 0}$ into $\cu'_{w'\le 0}$; it will be called {\it right weight-exact} if it sends $\cu_{w\ge 0}$ into $\cu'_{w'\ge 0}$. $F$ is said to be {\it weight-exact} if it is both left and right weight-exact. 

\end{enumerate}\end{defi}

A collection of basic properties of weight structures can be found in \S2 of \cite{bpure}.\footnote{These statements were actually proved in \cite{bws} (cf. also Remark 1.2.3(4) of \cite{bonspkar}); yet in that  paper somewhat distinct notation for weight structures was used.} 
In the current paper we will apply the following statements. 

\begin{pr}\label{pgenw}

I. Let $w$ be a weight structure on $\cu$.

1. Then  $\cu_{w\le 0}=\perpp\cu_{w\ge 1}$ and  $\cu_{w\ge 0}=\cu_{w\le 1}\perpp$. Thus if $w$ is generated by a class $\cp$ then  $\cp\subset \cu_{w\ge 0}$.

2. For each $i\in \z$ the classes $\cu_{w\ge i}$ and $\cu_{w\le i}$ are extension-closed (hence they are additive). 

II. Assume that $\cu$ is compactly generated. %замкнута относительно (малых) копроизведений.

1. Let  $\cp$ be a set of compact objects of  $\cu$. Then there exists (a unique) weight structure on $\cu$ that is generated by  $\cp$. % \footnote{Кроме того, для любых $\cu$ и $\cp$ существует не более одной весовой структуры, порожденной $\cp$ в $\cu$.} %существует.
%существует (единственная) весо

2. Assume that an exact functor  $F:\cu\to \cu'$ respects coproducts, $w$ is a weight structure on  $\cu$ that is generated by some class $\cp\subset \obj \cu$,
and $w'$ is a weight structure on  $\cu'$. Then $F$ is left weight-exact if and only if   $F(\cp)\subset \cu_{w\le 0}$.

3. Assume in addition that  $F$ is surjective on objects.  Then $F$ is weight-exact if and only if $w'$ is generated by $F(\cp)$.
\end{pr}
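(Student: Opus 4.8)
The plan is to prove the two implications separately, after reducing weight-exactness to its left and right halves and recording a few preliminaries. First I would observe that, by Proposition \ref{pgenw}(I.1) applied to $w$, generation gives $\cp\subset\cu_{w\ge0}$; moreover, unwinding the definition of generation one computes $\cu_{w\ge1}=(\cup_{i\ge0}\cp[-i])\perpp$, so every object of $\cu_{w\ge1}$ is right-orthogonal to $\cp$, whence $\cp\subset\perpp\cu_{w\ge1}=\cu_{w\le0}$; thus $\cp\subset\cu_{w=0}$. The very same computation shows that if $w'$ is generated by $F(\cp)$ then $F(\cp)\subset\cu'_{w'=0}$. Consequently in both implications one has $F(\cp)\subset\cu'_{w'\le0}$ (in the forward direction because weight-exactness sends $\cu_{w=0}$ into $\cu'_{w'=0}$; in the backward direction by the previous sentence), so Proposition \ref{pgenw}(II.2) already yields that $F$ is left weight-exact. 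It therefore remains to match the remaining assertion, right weight-exactness, with the equality $\cu'_{w'\ge0}=(\cup_{i>0}F(\cp)[-i])\perpp$.

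Next I would bring in the right adjoint. Since $\cp$ is a set, the triangulated subcategory it generates is small, so Lemma \ref{lcg}(1) supplies an exact right adjoint $G\colon\cu'\to\cu$ of $F$. The key reformulation is then, for $N\in\obj\cu'$, the chain $N\in(\cup_{i>0}F(\cp)[-i])\perpp \Leftrightarrow \cu'(F(\cp[-i]),N)=0\ \forall i>0 \Leftrightarrow \cu(\cp[-i],GN)=0\ \forall i>0 \Leftrightarrow GN\in\cu_{w\ge0}$, the middle step being the adjunction isomorphism and the last the definition of $w$. In other words, the statement ``$w'$ is generated by $F(\cp)$'' is equivalent to $\cu'_{w'\ge0}=\{N:GN\in\cu_{w\ge0}\}$. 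I would also record the elementary inclusion $\cu'_{w'\ge0}\subset(\cup_{i>0}F(\cp)[-i])\perpp$, which holds once $F(\cp)\subset\cu'_{w'=0}$ is known: then $F(\cp)[-i]\subset\cu'_{w'=-i}\subset\cu'_{w'\le-1}$ for $i>0$, and orthogonality (Definition \ref{dwstr}(iii), shifted) gives $\cu'_{w'\le-1}\perp\cu'_{w'\ge0}$.

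For the forward implication, assume $F$ is weight-exact and prove the reverse inclusion $(\cup_{i>0}F(\cp)[-i])\perpp\subset\cu'_{w'\ge0}$. Given $N$ in the left-hand class, the reformulation yields $GN\in\cu_{w\ge0}$, and right weight-exactness of $F$ gives $FGN\in\cu'_{w'\ge0}$. Now surjectivity enters: writing $N\cong F(X)$ and using the triangle identity $\epsilon_{F(X)}\circ F(\eta_X)=\id_{F(X)}$, the counit $\epsilon_N\colon FGN\to N$ is a split epimorphism, so $N$ is a retract of $FGN$; since $\cu'_{w'\ge0}$ is Karoubi-closed (Definition \ref{dwstr}(i)), we conclude $N\in\cu'_{w'\ge0}$. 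Combined with the inclusion of the previous paragraph this proves that $w'$ is generated by $F(\cp)$.

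The backward implication is where the main obstacle lies. Assuming $w'$ is generated by $F(\cp)$, left weight-exactness is in hand, and since left weight-exactness of $F$ is equivalent to right weight-exactness of $G$ (this follows from the adjunction together with the identity $\cu_{w\le0}=\perpp\cu_{w\ge1}$ of (I.1)), we obtain $G(\cu'_{w'\ge0})\subset\cu_{w\ge0}$; together with the equivalence of the second paragraph this gives $N\in\cu'_{w'\ge0}\Leftrightarrow GN\in\cu_{w\ge0}$. Right weight-exactness of $F$, namely $F(\cu_{w\ge0})\subset\cu'_{w'\ge0}$, then translates through this equivalence into the assertion that $GF$ preserves $\cu_{w\ge0}$, i.e. $M\in\cu_{w\ge0}\Rightarrow GFM\in\cu_{w\ge0}$, equivalently $\cu'(F(\cp)[-i],FM)=0$ for all $i>0$. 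This is the hard point: it does not follow from the adjunction formalism alone — knowing that $F$ is left weight-exact and $G$ right weight-exact says nothing a priori about the weight of $GFM$ — and it is exactly here that the hypothesis that $F$ is surjective on objects must be used in an essential way, transporting the vanishing $\cu(\cp[-i],M)=0$ valid for $M\in\cu_{w\ge0}$ to the required $\cu(\cp[-i],GFM)=0$. I expect to discharge this by identifying $F$ with the Verdier localisation by $\eu=\ker F$ and recognising $w'$ as the weight structure induced by $w$ on this localisation (equivalently, by lifting weight decompositions along the surjective functor $F$), which is the step I anticipate will require the most care.
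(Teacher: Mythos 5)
The paper itself proves every part of this proposition by citation (to [Pau12] and to Remark 2.1.5 and Proposition 2.1.4 of [Bon16]), so there is no in-text argument to compare against; your proposal in effect supplies a proof only of part II.3, taking I.1, I.2, II.1 and II.2 as inputs. Within that scope, your \emph{forward} implication is correct and complete: the existence of the right adjoint $G$ (which holds because $\cu$ is compactly generated and $F$ respects coproducts -- not ``because $\cp$ is a set'', since in II.2--3 $\cp$ is an arbitrary class and need not consist of compact objects), the adjunction reformulation $N\in(\cup_{i>0}F(\cp)[-i])\perpp\Leftrightarrow GN\in\cu_{w\ge 0}$, and the observation that surjectivity on objects makes the counit $FGN\to N$ a split epimorphism, so that $N$ is a retract of $FGN$, together give exactly what is needed. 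One small caveat: your claim $\cp\subset\cu_{w=0}$ uses I.1 as printed, but the inclusion $\cp\subset\cu_{w\ge 0}$ is not automatic for a generating class (it amounts to negativity of $\cp$; compare Remark \ref{robv}, which deduces only $\mgr(\sv)\subset\dmer_{\wchows\le 0}$ from I.1 -- the final clause of I.1 appears to carry a sign typo). Fortunately, at the places where you invoke it you only need $F(\cp)\subset\cu'_{w'\le 0}$, which does follow.

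The genuine gap is the \emph{backward} implication, which you explicitly leave as a plan rather than a proof: you correctly isolate the crux, namely showing $\cu'(F(P)[-i],FM)=0$ for $M\in\cu_{w\ge 0}$, $P\in\cp$, $i>0$, but the proposed repair -- identify $F$ with the Verdier localization by $\ker F$ and induce $w$ on the quotient -- cannot work at this level of generality, because an exact coproduct-preserving functor that is surjective on objects need not be a localization (it need not even be full; consider $D(k)\times D(k)\to D(k)$, $(A,B)\mapsto A\oplus B$). Worse, that same example with $\cp=\{(k,0),(0,k[1])\}$ shows that no purely formal argument from the stated hypotheses can exist: $w$ is generated by $\cp$, the weight structure on $D(k)$ generated by $F(\cp)=\{k,k[1]\}$ is the shift by $[1]$ of the standard one, and $F(k,0)=k$ fails to lie in its non-negative part, so $F$ is not right weight-exact. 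Thus the ``if'' direction really rests on the precise formulation and extra hypotheses of the cited Remark 2.1.5(7) of [Bon16] (note that the paper only ever applies the ``only if'' direction, in Proposition \ref{pbir}(2) and Remark \ref{rbir}(2), and that is the half you did prove); your argument as written does not establish it, and you should either import that statement verbatim or add the hypotheses under which your localization strategy is legitimate.
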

\begin{proof}
I. See \cite[Proposition 2.1.4(3), Remark 2.1.5(1)]{bpure}. %{bws}; см. обзор?

II.1. Immediate from Theorem 5 of \cite{paucomp};  cf. also  \cite[%замечание
Remark 4.2.2(1)]{bpure}. 

2. See Remark 2.1.5(3,4) of ibid. 

3. See Remark 2.1.5(7)  of loc. cit.
\end{proof}

\subsection{On Voevodsky motivic complexes}\label{smot}

We briefly recall the basics of the theory of  ($R$-linear unbounded) Voevodsky motivic complexes. The case of an arbitrary (associative commutative unital)  $R$
was not treated in detail in the literature; yet it is not much different from the "main" case  $R=\z$ (or $R$ being a localization of  $\z$; cf. \S5 of  \cite{kel}).

%{cdint}?!
$R$-linear unbounded motivic complexes are defined for an arbitrary %(associative commutative unital) ring 
 $R$; see  \cite[\S2.3]{bev}.\footnote{Recall that bounded above motivic complexes were considered in Lecture 14 of \cite{vbook}.}
We start from the description of  the category $\dmer$  given in loc. cit. One takes the additive category $\smc$ of Voevodsky smooth correspondences  (the notation is taken from \cite{1}); so, $\obj \smc=\sv$ and the morphisms in $\smc$ are algebraic analogues of multi-valued functors.  $\pscr$ will denote the abelian category of additive contravariant functors from $\smc$ into $R$-Mod. %-\modd$. %Легко видеть, что из   \cite[Theorem 13.1]{vbook} следует абелевость этой категории. 
 
For $X\in \sv$ we will use the notation  $\rtr(X)$ for the functor %пучок (см. там же) %??
 $Y\mapsto \smc(Y,X)\otimes_{\z}R$; this is an object of $\pscr$ that we will also consider as a complex (and so, as an object of $D(\pscr)$) whose non-zero term is in degree  $0$. The object  $\rtr(X)$ is certainly compact in $D(\pscr)$; %легко видеть, что 
	 hence \cite[Theorem 8.3.3]{neebook} implies that the set of all  $\rtr(X)$  generates $D(\pscr)$ as its own localizing subcategory.

$\dmer$ is defined as the Verdier quotient of  $D(\pscr)$ by the localizing subcategory generated by the union of two sets of complexes that we will now describe. 
%the two-term complexes  $\rtr(\afo\times X)\stackrel{pr_X}{\to} \rtr(X)$ for $X\in \sv$ (here the morphism $pr_X$ comes from the projection $\afo\times X\to X$; we will use the notation $HI$ for the set of this complexes)  along with the complexes 
The first of these sets  will be denoted by $HI$; its elements are two-term complexes  $\rtr(\afo\times X)\stackrel{pr_X}{\to} \rtr(X)$ for $X\in \sv$ (here the morphism $pr_X$ comes from the projection $\afo\times X\to X$).
The second set will be denoted by $ \zmv$; its elements are complexes  
\begin{equation}\label{enisc}
\rtr(W)\xrightarrow{\begin{pmatrix}-\rtr(k) \\ \rtr(g) \end{pmatrix}} \rtr(Y)\bigoplus \rtr(V)\xrightarrow{\begin{pmatrix} \rtr(f) & \rtr(j) \end{pmatrix}} \rtr(X)\end{equation}
 corresponding to all Cartesian squares
\begin{equation}\label{enis} %$$\
\begin{CD}
 W@>{k}>>Y\\
@VV{g}V@VV{f}V \\
V@>{j}>>X
\end{CD}\end{equation}
%$$
of smooth varieties such that  connecting morphisms are open embeddings  (hence  $W=V\cap Y$)   and $Y\sqcup V$ covers $X$. %We will use the notation  $ \zmv$ for the set of all these complexes. 
 We will write  $L'_R$ for the corresponding localization functor and note that this functor respects coproducts and compact objects according to Lemma  \ref{lcg}(3). %предложение 4.3.1.3(III.1) статьи \cite{bos}). 
 Moreover, the category  $\dmer$ is symmetric monoidal  (see  \cite[\S2.3]{bev}). 

Now let us give some alternative descriptions of $\dmer$; these are similar to certain constructions in the literature. 
We will need some more notation.

$\sscr$ will denote the full subcategory of $\pscr$ whose objects are those functors that yield Nisnevich sheaves (on the \'etale site of $\sv$; the objects of $\sscr$ can be called  $R$-linear {\it sheaves with transfers}). We recall that this category is abelian also (this is an easy consequence of  \cite[Theorem 13.1]{vbook}), and all $\rtr(X)$ are its objects  (easy from Lemma 6.2 of ibid.). %"плоское" тензорное произведение с "целого" случая????

We will also need the additive category  $\smcrpl$ of all coproducts of sheaves of the type   $\rtr(X)$ in the category $\sscr$ (certainly, $\smcrpl$ is also a full subcategory of  $\pscr$). %\pscr?? легко можно описать??
We will write $K'(\smcrpl )$ %полную 
for the localizing subcategory of  the homotopy category $K(\smcrpl)$ that is generated by  $\obj \smcrpl$ (here we consider objects of  $\smcrpl$ as one-term complexes; note that $K(\smcrpl)$ is closed with respect to coproducts). 

\begin{pr}\label{pisomot}
The following categories are canonically equivalent:

1) The localization of $K'(\smcrpl )$ by the localizing subcategory  $\du$ generated by $HI\cup \zmv$; %комплексами типа HI и \zmv;

2) $\dmer$;

3)  the localization $\dmer'$ of the category $D(\sscr )$ by the localizing subcategory generated by $HI$.

\end{pr}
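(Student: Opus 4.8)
The plan is to prove the three categories are equivalent by exhibiting canonical localization functors among them and then invoking Lemma \ref{lcg} — particularly parts (2) and (3) — to check that these functors are equivalences. The key observation is that all three categories are built by the same two-step recipe: start from a "large" triangulated category compactly generated by copies of $\rtr(X)$, then Verdier-quotient by localizing subcategories generated by (subsets of) $HI\cup\zmv$. So the heart of the matter is comparing the three ambient categories $K'(\smcrpl)$, $D(\pscr)$, and $D(\sscr)$ together with their respective localizations, all relative to the common generating set $\{\rtr(X):X\in\sv\}$.

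First I would treat the equivalence of 2) and 3). The category $\dmer$ is by definition the quotient of $D(\pscr)$ by the localizing subcategory generated by $HI\cup\zmv$, whereas $\dmer'$ is the quotient of $D(\sscr)$ by the subcategory generated by $HI$ alone. Here the point is that passing from presheaves with transfers to Nisnevich sheaves with transfers is itself a localization that kills exactly the $\zmv$-relations: the Mayer--Vietoris complexes in \eqref{enisc} become acyclic after Nisnevich sheafification, since they encode the Nisnevich (indeed Zariski) descent condition. More precisely, I would show that the localization of $D(\pscr)$ by the subcategory generated by $\zmv$ is canonically equivalent to $D(\sscr)$, compatibly with the generating objects $\rtr(X)$ (which lie in both categories, as noted after the definition of $\sscr$). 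Granting this, quotienting further by $HI$ on the $D(\sscr)$ side matches quotienting by $HI\cup\zmv$ on the $D(\pscr)$ side, yielding $2)\simeq 3)$. The technical input is the standard fact that $D(\sscr)$ is the localization of $D(\pscr)$ at the Nisnevich-local equivalences, which is where \cite[Theorem 13.1]{vbook} and the exactness of sheafification enter.

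For the equivalence of 1) with 2) and 3), I would use that $K'(\smcrpl)$ is compactly generated by the objects $\rtr(X)$ (viewed as one-term complexes), and that the natural functor $K'(\smcrpl)\to D(\sscr)$ sending a complex of sheaves to itself respects coproducts and sends $\rtr(X)$ to $\rtr(X)$. Since $\smcrpl$ consists of coproducts of the projective-like objects $\rtr(X)$ and these are compact in both source and target, Lemma \ref{lcg}(2) should give that this functor is fully faithful on the relevant subcategory and, because its image generates $D(\sscr)$ as a localizing subcategory, an equivalence $K'(\smcrpl)\xrightarrow{\sim} D(\sscr)$ — or at least an equivalence after the common quotient. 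Then Lemma \ref{lcg}(3) guarantees that the two localizations (by $\du$ generated by $HI\cup\zmv$ on the left, and by the corresponding subcategory on the right) correspond under this equivalence and remain compatible with coproducts and compact generation. Thus $1)\simeq 3)$, completing the chain.

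The main obstacle I anticipate is the careful bookkeeping of which relations are imposed in which ambient category, and verifying the compatibility of the generating objects across the three constructions so that Lemma \ref{lcg}(2)--(3) applies cleanly. In particular, the subtle point is that $K'(\smcrpl)$ uses $\smcrpl\subset\sscr$ (sheaves, with the $\zmv$-relations not yet quotiented), so one must check that quotienting $K'(\smcrpl)$ by $\du = \langle HI\cup\zmv\rangle$ genuinely matches $D(\sscr)$ localized by $HI$; the $\zmv$-part of $\du$ here plays the role of forcing the homotopy-category-level complexes to become honest sheaf-theoretic objects. Ensuring that no torsion or set-theoretic pathology obstructs the localizations (so that morphism classes remain sets) is handled by Lemma \ref{lcg}(3), but confirming its hypotheses — smallness of the generating set $\{\rtr(X)\}$ and compactness — is the step that requires the most attention.
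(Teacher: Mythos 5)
Your overall strategy (chain the three categories together by localization functors and invoke Lemma \ref{lcg}) matches the paper's, but two of your key intermediate claims are false, and each one hides the actual content of the proposition. First, you assert that the localization of $D(\pscr)$ by the localizing subcategory generated by $\zmv$ alone is equivalent to $D(\sscr)$. It is not: $\zmv$ only encodes Zariski Mayer--Vietoris, whereas $D(\sscr)$ is the localization of $D(\pscr)$ at the Nisnevich-local equivalences, i.e.\ (cf.\ Remark \ref{rdef}(\ref{ir1})) at the larger set $\nmv$ of complexes attached to elementary distinguished squares. The fact that $\nmv$ (equivalently, all presheaves with vanishing Nisnevich sheafification) lies in the localizing subcategory generated by $HI\cup\zmv$ is a theorem of Voevodsky (\cite[Theorem 3.2.6]{1}) whose proof uses homotopy invariance in an essential way; so you cannot split the quotient as ``first $\zmv$, then $HI$'' --- the two sets of relations must be combined before Zariski descent upgrades to Nisnevich descent. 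This is exactly the step the paper delegates to the argument of loc.\ cit., and your proposal skips it.

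Second, for the comparison of 1) with 3) you propose to apply Lemma \ref{lcg}(2) to the natural functor $K'(\smcrpl)\to D(\sscr)$. The hypothesis of that lemma --- full faithfulness on the compact generators --- fails here: in $K(\smcrpl)$ one has $\operatorname{Hom}(\rtr(X),\rtr(Y)[i])=0$ for $i\neq 0$, while in $D(\sscr)$ this group is $H^i_{Nis}(X,\rtr(Y))$, which is nonzero in general (Remark \ref{rdef}(\ref{ir2})). The object $\rtr(X)$ is projective in $\pscr$ but not in $\sscr$; this is precisely why the paper compares $K'(\smcrpl)$ with $D(\pscr)$ (where full faithfulness on the generators does hold, so that Lemma \ref{lcg}(2,3) applies after passing to the common quotient by the subcategory generated by $HI\cup \zmv$) rather than with $D(\sscr)$. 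Your hedge ``or at least an equivalence after the common quotient'' is where the theorem actually lives, and it is not supplied by the lemma without first choosing the right ambient category.
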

\begin{proof}
All the objects of the form  $\rtr(X)$ (for $X\in \sv$) as well as bounded complexes whose terms are of this type are compact in the categories  $K'(\smcrpl )\subset K(\smcrpl )\subset K(\pscr)$ (and also is $D(\pscr)$). Denote the triangulated subcategory  $\lan \rtr(X):\ X\in \sv\ra\subset K'(\smcrpl )$ by $K^f(\smcrpl )$.
Applying Lemma  \ref{lcg}(3) we obtain that the natural functor  $K'(\smcrpl )/\du\to \dmer$ %yields an equivalence %задает изоморфизм??
gives an isomorphism between those full  subcategories whose objects come from  $K^f(\smcrpl )$. Applying part  2 of the lemma we obtain that the category  $K'(\smcrpl )/\du$ is equivalent to $ \dmer$.

%Чтобы доказать, что 
 Now we prove that  $ \dmer$ is equivalent to $\dmer'$. Note that the natural functor $D(\pscr)\to D(\sscr)$ respects coproducts and kills all elements of $\zmv$; hence there exists an exact functor $\dmer\to \dmer'$ that respects coproducts. Recall also that the Nisnevich sheafifications of  objects of $\pscr$ (considered as presheaves on the aforementioned site) are objects of $\sscr$ (i.e., sheaves with transfers); this statement follows from \cite[Theorem 13.1]{vbook} as well. It obviously follows that the category  $ D(\sscr)$ is equivalent to the localization of  $D(\pscr)$ by the localizing category generated by those presheaves whose sheafification is zero. Hence it suffices to verify that the localizing subcategory of  $D(\pscr)$ generated by $HI\cup \zmv$ contains all presheaves of this sort. 
This fact can be easily justified using an argument used in the proof of \cite[Theorem 3.2.6]{1} (where it was established in the case  $R=\z$).
\end{proof}

\begin{rema}\label{rdef}
\begin{enumerate}
\item\label{ir1}
Our list of descriptions of  $\dmer$ can be completed  %by the category obtained via
 by means of replacing $\zmv$ by a larger set of complexes. Recall that the Cartesian square   (\ref{enis}) is called an elementary distinguished  square if the morphisms $k$ and $j$ are open embeddings,  $f$ and $g$ are \'etale, and the base change of $f$ to  $X\setminus j(V)$  is an isomorphism. %также выполнено для...??
Denote by $\nmv$ the set of complexes  (\ref{enisc}) corresponding to squares satisfying these conditions; this set obviously contains  $\zmv$.  %каждый комплекс  типа \zmv также является комплексом типа \nmv. С другой стороны, комплексы типа \nmv являются объектами категории %$\du$, упомянутой в Предложении \ref{pisomot} 
Conversely, the category %$\lan HI\cup \zmv\ra_{K^f(\smcrpl )}$
 $\kar_{K^f(\smcrpl )}\lan HI\cup \zmv\ra$ contains  $\nmv$; indeed, it suffices to verify this statement in the case  $R=\z$, and 
then it easily follows from the aforementioned Theorem 3.2.6 of \cite{1}.

Thus in all the three descriptions in Proposition \ref{pisomot} the set  $\zmv$ may be replaced $\nmv$ (and the localizations will not change). This reduces the equivalence of  $\dmer$ with  $\dmer{}'$ to the following well-known fact:  the category $D(\sscr)$ is equivalent to the localization of  $ D(\pscr)$ by the localizing subcategory generated by  $\nmv$; see also \S6.2 of \cite{cdhom}.

\item\label{ir2}
 Let $X\in \sv$. The aforementioned statement implies that the sheaf $\rtr(X)$ is compact in  $D(\sscr)$.

We also give another proof of the latter fact. According to Exercise  13.5 of \cite{vbook} (see also Lemma  13.4 of loc. cit.), for each  $i\in \z$ and a bounded above complex $C$ of objects of  $\sscr$ the group $D(\sscr)(\rtr(X),C[i])$ is naturally isomorphic to  $ H^i_{Nis}(X,C)$ (i.e., to the $i$th Nisnevich hypercohomology group of  $X$ with coefficients in $C$). This fact immediately extends to all objects $D(\sscr)$. Since for any family     $(C_j)$ of objects of  $D(\sscr)$ we have $H^0(X,\coprod C_j)\cong \bigoplus _j H^0(X, C_j)$ (see Corollary   1.1.11 and \S1.1.12 of \cite{cdweil}), we obtain the compactness statement in question. 

\item\label{ir3} Hence the category $D(\sscr)$ is compactly generated. Since the localization functor  $L_R:D(\sscr)\to \dmer{}'$ respects coproducts (see Lemma \ref{lcg}(3)), part 1 of the lemma gives the existence of an adjoint functor  $i_R: \dmer{}'\to  D(\sscr)$ that is certainly a full embedding. We will often identify the categories  $\dmer$  and $\dmer{}'$ with the essential image of  $i_R$ (that is called {\it the category of motivic complexes}). We will use the notation  $\mgr(X)$ for  the image %images???
of $\rtr(X)$ (for $X\in \sv$) in all these categories.

It is easily seen (see Theorem 9.1.16 of \cite{neebook}) that motivic complexes are characterized inside the category $D(\sscr)$ by the following homotopy invariance conditions for the presheaves $ H^i_{Nis}(-,C)$: for all $i\in \z$ and  $X\in \sv$ we have $ H^i_{Nis}(X,C)\cong H^i_{Nis}(X\times \afo ,C)$. The functor $i_R\circ L_R$ can be described by an explicit formula  (see \cite[Remark 14.7]{vbook}); yet we will not need this fact below. 
%однако, мы не будем пользоваться этим утверждением и, поэтому, не будем его формулировать и обосновывать. 

We note also that the adjunction between  $L_R$ and $i_R$ combined with the isomorphism mentioned in part  \ref{ir2} of this remark implies that the group  $\dmer(\mgr(X),C[i])$ is naturally isomorphic to $ H^i_{Nis}(X,C)$ for each motivic complex  $C$ and $i\in \z$.

\item\label{ir4}
One of the advantages of our first description of  $\dmer$ is that it simplifies checking the compactness of all $\mgr(X)$ in this category. Moreover, we will use some more properties of  $\dmer$ established in \S6 of \cite{bev}. Still we note that in the case  $R=\z$ all these statements were proved in the papers of V. Voevodsky and F. D\'eglise; to generalize them to the case of an arbitrary   $R$ one may apply Proposition  \ref{pcmot} below. 

In particular, below we will apply the following property of  $\dmer$: for each smooth $Y/k$ and smooth proper  $X/k$ all of whose connected components are of dimension  $n$ and $i\ge 0$ the group  $\dmer(\rtr(Y),\rtr(X)[i])$ vanishes if  $i>0$ and equals  $CH^n(X\times Y)\otimes_{\z} R$ if $i=0$; see  \cite[Corollary 6.7.3]{bev}. Recalling also that the composition of morphisms in the full subcategory   $\operatorname{Corr^{rat}_R}$ of $\dmer$ whose objects are all $\rtr(X)$, is compatible with the composition of morphisms in the category of effective Chow motives (note that it suffices to prove this statement in the case $R=\z$; see Proposition \ref{pcmot} below) we obtain the following: the additive category  $\chower=\kar_{\dmer} \operatorname{Corr^{rat}_R}$ is the natural  $R$-linear version of effective Chow motives.\footnote{Note that the category  $\dmer$ is Karoubian according to Proposition 1.6.8 of \cite{neebook}; hence  $\chower$ is Karoubian also.}

\end{enumerate}
\end{rema}

We will also need some properties of the "extension of scalars" for motivic complexes.

\begin{pr}\label{pcmot}
 Let $R'$ be an associative commutative unital  $R$-algebra. Then the natural functor  $\otimes_R{R'}: D(\sscr)\to D(\sscrp)$ %flat resolutions???!
 %обладает точным правым сопряженным и?????
  yields a commutative diagram 
\begin{equation}\label{ecmot}
\begin{CD}
 \smc@>{\mgr }>>\dmer' @>{i_R}>>D(\sscr)  @>{L_R}>>\dmer'\\
@VV{=}V@VV{-\otimes^{mot}_R{R'}}V@VV{-\otimes_R{R'}}V @VV{\otimes^{mot}_R{R'}}V \\
\smc@>{\mgrp}>>\dmerpp@>{i_{R'}}>>D(\sscrp) @>{L_{R'}}>>\dmerpp
\end{CD}
\end{equation}
of functors.
\end{pr}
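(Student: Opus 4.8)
The plan is to view the two rows as the localization towers of Proposition~\ref{pisomot}(3) for the rings $R$ and $R'$, and to produce $-\otimes^{mot}_R{R'}$ by descending the sheaf-level extension of scalars $-\otimes_R{R'}$ through these localizations. Throughout I take $-\otimes_R{R'}\colon D(\sscr)\to D(\sscrp)$ to be the exact, coproduct-preserving extension-of-scalars functor characterized on generators by $\rtr(X)\otimes_R{R'}\cong R'_{tr}(X)$; this isomorphism is immediate from $\rtr(X)(Y)=\smc(Y,X)\otimes_\z R$, since $(\smc(Y,X)\otimes_\z R)\otimes_R{R'}\cong\smc(Y,X)\otimes_\z R'$, and it is natural in $X\in\smc$ (there is no derived correction here because the correspondence groups are free $\z$-modules).

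First I would establish the rightmost square, which simultaneously \emph{defines} $-\otimes^{mot}_R{R'}$. Consider the composite $L_{R'}\circ(-\otimes_R{R'})\colon D(\sscr)\to\dmerpp$; it is exact and respects coproducts by Lemma~\ref{lcg}(3). Applying $-\otimes_R{R'}$ to an element $[\rtr(\afo\times X)\to\rtr(X)]$ of $HI$ for $R$ produces the corresponding element of $HI$ for $R'$, which $L_{R'}$ kills; being triangulated and coproduct-preserving, the composite then kills the entire localizing subcategory $\lan HI\ra\subset D(\sscr)$. By the universal property of the Verdier localization $L_R$ there is a unique exact, coproduct-preserving functor $-\otimes^{mot}_R{R'}\colon\dmer'\to\dmerpp$ with $(-\otimes^{mot}_R{R'})\circ L_R\cong L_{R'}\circ(-\otimes_R{R'})$, which is exactly the rightmost square. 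The leftmost square then follows by evaluating this at $\rtr(X)$: since $\mgr=L_R\circ\rtr(-)$ and $\mgrp=L_{R'}\circ R'_{tr}(-)$, one gets $(-\otimes^{mot}_R{R'})(\mgr(X))\cong L_{R'}(\rtr(X)\otimes_R{R'})\cong L_{R'}(R'_{tr}(X))=\mgrp(X)$.

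The main obstacle is the middle square, i.e.\ the compatibility of $-\otimes_R{R'}$ with the full embeddings $i_R,i_{R'}$: since $i$ is a right adjoint (Remark~\ref{rdef}(\ref{ir3})) while $-\otimes_R{R'}$ is built from left adjoints, there is no formal reason for them to commute. I would reduce it as follows. Using $L_R\circ i_R\cong\id$ together with the rightmost square gives $-\otimes^{mot}_R{R'}\cong L_{R'}\circ(-\otimes_R{R'})\circ i_R$, so for each $M\in\obj\dmer'$ both objects $(-\otimes_R{R'})(i_R M)$ and $i_{R'}((-\otimes^{mot}_R{R'})M)$ have the same image $(-\otimes^{mot}_R{R'})M$ under $L_{R'}$ (the second because $L_{R'}\circ i_{R'}\cong\id$). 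As $L_{R'}$ restricts to an equivalence from the motivic complexes (the essential image of $i_{R'}$) onto $\dmerpp$, it suffices to prove that $(-\otimes_R{R'})(i_R M)$ is again motivic; equivalently, that $-\otimes_R{R'}$ sends motivic complexes to motivic complexes, after which naturality and the isomorphism of the two sides are automatic.

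To prove this preservation I would use that the motivic complexes form a localizing subcategory of $D(\sscr)$: by the characterization in Remark~\ref{rdef}(\ref{ir3}) they are the right orthogonal to $HI$ and its shifts, and this orthogonal is closed under coproducts because the elements of $HI$ are compact. Transporting the generation of $\dmer'$ by the $\mgr(X)$ through the equivalence $L_R$ shows this subcategory is generated, as a localizing subcategory, by the objects $i_R\mgr(X)$; since $-\otimes_R{R'}$ is exact and coproduct-preserving, it then suffices to check that $(-\otimes_R{R'})(i_R\mgr(X))$ is motivic for each $X\in\sv$. Here I would invoke the explicit computation of the reflection $i_R L_R$ by the Suslin--Voevodsky complex (the formula cited in Remark~\ref{rdef}(\ref{ir3}), \cite[Remark~14.7]{vbook}): $i_R\mgr(X)$ is represented by the complex whose $n$-th term is the sheaf $Y\mapsto\smc(Y\times\Delta^n,X)\otimes_\z R$ attached to the cosimplicial simplex $\Delta^{\bullet}$. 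Because the groups $\smc(Y\times\Delta^n,X)$ are free $\z$-modules, each term has free, hence flat, $R$-module values, so $-\otimes_R{R'}$ is computed termwise with no higher derived contribution and carries this complex to the analogous Suslin complex with $R'$-coefficients, namely $i_{R'}\mgrp(X)$, which is motivic. This settles the preservation of motivic complexes, hence the middle square, and with it the commutativity of the whole diagram.
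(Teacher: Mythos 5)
Your proposal is correct and follows the paper's skeleton for two of the three squares: the right-hand square is obtained from the universal property of the Verdier localization exactly as in the paper (which simply calls it obvious), and the left-hand square is deduced formally from the others. For the middle square both you and the paper reduce to the single substantive claim that $-\otimes_R{R'}$ carries motivic complexes to motivic complexes, but you establish this claim by a genuinely different route. The paper applies the characterization of motivic complexes via homotopy invariance of the presheaves $H^i_{Nis}(-,C)$ together with \cite[Theorem 22.3]{vbook} (Nisnevich sheafification preserves homotopy invariance of presheaves with transfers), which treats an arbitrary motivic complex at once; commutativity is then extracted from the formal statement \cite[Proposition 1.1.1(III)]{bpure} on Bousfield localizations, which is precisely the argument you spell out by hand ($L_{R'}$ restricts to an equivalence on local objects). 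You instead observe that the motivic complexes form a localizing subcategory generated by the objects $i_R\mgr(X)$ and verify the claim only on these generators via the Suslin complex $C_*(\rtr(X))$ and termwise flatness; this reduction step is clean and correct. Note, however, two costs of your route: you invoke the identification $i_R\mgr(X)\simeq C_*(\rtr(X))$, i.e.\ exactly the explicit formula that Remark \ref{rdef}(\ref{ir3}) announces the paper will not need --- and which is itself one of Voevodsky's hard homotopy-invariance theorems over perfect fields, so the deep input is relocated rather than avoided; and you should justify why the termwise tensor of this bounded-above complex of free-$R$-valued sheaves computes the derived extension of scalars with no sheafification correction (here one is saved by the fact that each term $Y\mapsto \smc(Y\times\Delta^n,X)\otimes_{\z}R'$ is already a sheaf with transfers). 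With those two points made explicit, your argument is a valid alternative whose advantage is concreteness on generators, while the paper's is shorter and uniform over all motivic complexes.
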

\begin{proof}
The existence (and commutativity) of the right hand square in our diagram is obvious. 

Next, recall that motivic complexes are characterized by the homotopy invariance of the presheaves $ H^i_{Nis}(-,C)$ (see Remark \ref{rdef}(\ref{ir3})). Combining this fact with Theorem 22.3 of \cite{vbook} (that says that the Nisnevich sheafification respects the homotopy invariance of presheaves with transfers) we obtain that 
 the functor $ -\otimes_R{R'}$ sends $R$-linear motivic complexes into $R'$-linear ones. 
Combining the latter statement with  \cite[Proposition 1.1.1(III)]{bpure}  % the properties of Bousfield localizations of triangulated categories  (see 
(that treats Bousfiled localizations of triangulated categories following \cite[\S9]{neebook}; cf. also Remark 1.3.3(3) of ibid.) % and the distinguished triangle  (1.1.1) in \cite{bpure}) 
 we easily obtain the commutativity of the middle square in the diagram. %  (since the functor  $-\otimes^{mot}_R{R'}$  sends $R$-linear motivic complexes  into $R'$-linear ones; see Remark \ref{rdef}(\ref{ir3})).

  It remains to note that the commutativity of the left hand square in the diagram follows immediately from the commutativity of the diagram obtained from our one by means of deleting the second column (certainly, the horizontal arrows passing through it should be composed pairwisely).
\end{proof}

\begin{rema}\label{rcoeff}
1. Obviously the functor  $-\otimes_R{R'}$ respects coproducts. Since  $L_R$ is surjective on objects, and both $L_R$ and $L_{R'}$ respect coproducts (see Remark  \ref{rdef}(\ref{ir3})), we obtain that the functor  $-\otimes^{mot}_R{R'}$ respects coproducts also. 

The functors $i_R$ and $i_{R'}$ respect coproducts as well  (see Remark \ref{rdef}(\ref{ir3}) once again), but we will not need this fact. 

2. The cases  $R=\z$ and $R=\zop$ appear to be the most interesting in the context of this paper. If one restricts to these cases then the corresponding motivic extension of scalars functor can be described as a certain Verdier localization; see appendix A of \cite{kel}.  %simplifies the proofs????
%В этих случаях функторы ""
\end{rema}

\section{On the Chow weight structure for effective motives}% ных комплексов}
\label{swchows}

In \S\ref{sdeff} we give the definition of our "effective" Chow weight structure $\wchows$ and discuss its relation to the Chow weight structures defined in previous papers.
Our description of $\wchows$ implies that a motivic complex has non-negative weights if and only if its positive Nisnevich hypercohomology vanishes. 

In \S\ref{stwist} %a theorem relating 
 the relation of our weight structure $\wchows$  to motivic twists is studied; this gives bounds on weights of certain motives. We also prove that the twist functor $-\lan 1\ra=-(1)[2]$ is weight-exact; this statement is important for the next section.

Since the distinctions between  $\dmer$, $\dmer'$, and the category of motivic complexes (see Remark \ref{rdef}(\ref{ir3})) are irrelevant for our arguments below, we will use the notation $\dmer$ to the category of motivic complexes.

\subsection{The definition of %и основные свойства 
$\wchows$ and its comparison with the Chow weight structures defined earlier }\label{sdeff}

We will use the notation  $w_{\chower}$ for the weight structures generated by the set $\mgr(\sv)$ in the category  $\dmer$ (see Proposition  \ref{pgenw}(II.1)). We will write just $\wchows$ for it and call it the Chow weight structure when this will cause no ambiguity.

\begin{rema}\label{robv}
The definition of  $\wchows$ along with the Remark \ref{rdef}(\ref{ir3}) obviously imply that the motivic complex  $C$ belongs to $ \dmer_{\wchows\ge 0}$ if and only if for any $ X \in\sv$ and $i>0$ we have $ H^{i}_{Nis}(X,C)=\ns$.

Applying Proposition  \ref{pgenw}(I.1) we also obtain that $\mgr(\sv)\subset \dmer_{\wchows\le 0}$.
\end{rema}

We will use the notation   $\spv$ for the set of smooth projective $k$-varieties. %???и докажем важную теорему о сравнении весовых структур.

\begin{theo}\label{tspv}
1. $\chower\subset \underline{Hw}_{\chower}$

2. Assume that $R$ is a $\zop$-algebra. Then  $w_{\chower}$ is also generated by  the set $\mgr(\spv)$. %неположительными  сдвигами 
 %(т.е., $R$-линейными мотивами гладких {\bf проективных} многообразий).\end{theo}

3.  The functor $-\otimes^{mot}_R{R'}$ (as defined in  Proposition  \ref{pcmot}) is weight-exact (with respect to the weight structures  $w_{\chower}$ and $w_{\chowerp}$). %R"=??
\end{theo}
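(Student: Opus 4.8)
The plan is to treat the three assertions separately, noting in advance that assertion 1 and the ``left'' half of assertion 3 are formal consequences of the defining property of $\wchows$ together with the $\operatorname{Hom}$-computation of Remark \ref{rdef}(\ref{ir4}), whereas assertion 2 genuinely rests on resolution of singularities and the right weight-exactness in assertion 3 is the only real computation. For assertion 1, recall that the heart $\hw_{\chower}$ has object class $\dmer_{\wchows=0}=\dmer_{\wchows\le 0}\cap\dmer_{\wchows\ge 0}$, which is Karoubi-closed in $\dmer$ by axiom (i) of Definition \ref{dwstr}. Since $\chower=\kar_{\dmer}\operatorname{Corr^{rat}_R}$ is the Karoubi-closure of the full subcategory on the objects $\mgr(X)$, $X\in\spv$, it suffices to show $\mgr(X)\in\dmer_{\wchows=0}$ for every smooth projective $X$. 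The containment $\mgr(X)\in\dmer_{\wchows\le 0}$ is immediate from Remark \ref{robv} (as $\spv\subset\sv$); for $\mgr(X)\in\dmer_{\wchows\ge 0}$ I would use the criterion of Remark \ref{robv}, which requires $H^i_{Nis}(Y,\mgr(X))=\ns$ for all $Y\in\sv$ and $i>0$. But $H^i_{Nis}(Y,\mgr(X))\cong\dmer(\mgr(Y),\mgr(X)[i])$ by Remark \ref{rdef}(\ref{ir3}), and this group vanishes for $i>0$ since $X$ is smooth proper (Corollary 6.7.3 of \cite{bev}, quoted in Remark \ref{rdef}(\ref{ir4})). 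Hence $\mgr(\spv)\subset\dmer_{\wchows=0}$, giving $\chower\subset\hw_{\chower}$.

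For assertion 2, write $w$ for $w_{\chower}$ (generated by $\mgr(\sv)$) and $w'$ for the weight structure generated by $\mgr(\spv)$; both exist by Proposition \ref{pgenw}(II.1). Since $\mgr(\spv)\subset\mgr(\sv)$, comparing the descriptions $\dmer_{w\ge 0}=(\cup_{i>0}\mgr(\sv)[-i])\perpp$ and $\dmer_{w'\ge 0}=(\cup_{i>0}\mgr(\spv)[-i])\perpp$ gives $\dmer_{w\ge 0}\subset\dmer_{w'\ge 0}$ at once; and because a weight structure is determined by its class $\dmer_{w\ge 0}$ via $\dmer_{w\le 0}=\perpp\dmer_{w\ge 1}$ (Proposition \ref{pgenw}(I.1)), it suffices to prove the reverse inclusion $\dmer_{w'\ge 0}\subset\dmer_{w\ge 0}$. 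I claim this reduces to the single statement $\mgr(\sv)\subset\dmer_{w'\le 0}$: granting it, orthogonality (axiom (iii)) yields $\mgr(Y)\perp\dmer_{w'\ge 1}$ for every smooth $Y$, so for $C\in\dmer_{w'\ge 0}$ and $i>0$ we have $C[i]\in\dmer_{w'\ge 1}$ and hence $\mgr(Y)[-i]\perp C$, which says exactly that $C\in\dmer_{w\ge 0}$. It remains to see that the motive of an \emph{arbitrary} smooth variety lies in $\dmer_{w'\le 0}$, and this is the one place where the hypothesis that $R$ be a $\zop$-algebra enters: it makes resolution of singularities available (Hironaka for $p=0$, Gabber's after inverting $p$), so that any smooth $Y$ can be built through localization/Gysin distinguished triangles from the motive of a smooth projective compactification and the Tate twists $\mgr(Z)\lan c\ra$ of the smooth projective strata $Z$ of a normal-crossings boundary. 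Since $\dmer_{w'\le 0}$ is extension-closed (Proposition \ref{pgenw}(I.2)), stable under $[-1]$, and contains all such twists (these remain in $\hw_{w'}$, being Chow motives by assertion 1), an induction on $\dim Y$ assembling these triangles gives $\mgr(Y)\in\dmer_{w'\le 0}$; the corresponding non-positivity of weights of smooth-variety motives was already established, in the language of the earlier Chow weight structures, in \cite{bws} and \cite{bzp}. This resolution-of-singularities input is the main obstacle for assertion 2.

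For assertion 3, set $F=-\otimes^{mot}_R R':\dmer\to\dmerp$ (the $R'$-linear motivic category); it respects coproducts (Remark \ref{rcoeff}(1)) and sends $\mgr(X)$ to $\mgrp(X)$ by the commutative diagram of Proposition \ref{pcmot}, so $F(\mgr(\sv))=\mgrp(\sv)$. Left weight-exactness is then immediate from Proposition \ref{pgenw}(II.2): since $F(\mgr(\sv))=\mgrp(\sv)\subset\dmerp_{w_{\chowerp}\le 0}$ by Remark \ref{robv} over $R'$, and $w_{\chower}$ is generated by $\mgr(\sv)$, the functor $F$ carries $\dmer_{\wchows\le 0}$ into $\dmerp_{w_{\chowerp}\le 0}$. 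The substantial point is right weight-exactness. Take $C\in\dmer_{\wchows\ge 0}$, so $H^q_{Nis}(Y,C)=\ns$ for all $Y\in\sv$ and $q>0$ (Remark \ref{robv}); I must show $H^n_{Nis}(Y,F(C))=\ns$ for $n>0$. By Proposition \ref{pcmot} and Remark \ref{rdef}(\ref{ir3}), $H^\ast_{Nis}(Y,F(C))$ is the Nisnevich hypercohomology of the derived tensor $(i_R C)\otimes^L_R R'$. Choosing a free resolution $P_\bullet\to R'$ of $R'$ over $R$ and using that $\mgr(Y)$ is compact, so that $H^\ast_{Nis}(Y,-)$ commutes with the coproducts occurring in $C\otimes_R P_j$, I obtain a convergent spectral sequence
\begin{equation*}
E_2^{-j,q}=\operatorname{Tor}^R_j\bigl(H^q_{Nis}(Y,C),R'\bigr)\Longrightarrow H^{q-j}_{Nis}(Y,F(C)).
\end{equation*}
Since $H^q_{Nis}(Y,C)=\ns$ for $q>0$ while $j\ge 0$, every term abutting to a total degree $q-j>0$ vanishes, whence $H^n_{Nis}(Y,F(C))=\ns$ for $n>0$ and $F(C)\in\dmerp_{w_{\chowerp}\ge 0}$. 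Thus $F$ is both left and right weight-exact. The main obstacle here is setting up and justifying the convergence of this universal-coefficient spectral sequence; the non-flatness of $R'$ over $R$ is harmless, since $\operatorname{Tor}^R_j$ only lowers cohomological degree.
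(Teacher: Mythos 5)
Your proofs of assertions 1 and 3 are correct and essentially coincide with the paper's. For assertion 1, both arguments reduce, via the Karoubi-closedness of the heart and of $\chower$, to showing $\mgr(\spv)\subset\dmer_{\wchows=0}$, and then quote the vanishing of \cite[Corollary 6.7.3]{bev} recalled in Remark \ref{rdef}(\ref{ir4}). For assertion 3, the paper's argument is literally ``the underlying complex of $C\otimes^{mot}_R R'$ is obtained by tensoring $C$ with a flat $R$-resolution of $R'$ concentrated in non-positive degrees''; your Tor spectral sequence is the same computation in unrolled form, and the convergence issue you flag is settled exactly by the compactness of $\mgr(Y)$ that you invoke (alternatively, one can avoid the spectral sequence altogether: the class of objects of $D(\sscr)$ with vanishing positive Nisnevich hypercohomology is closed under coproducts, extensions, $[1]$, and the relevant homotopy colimits, and the total complex is built from the pieces $C\otimes P_{-j}[j]$, $j\ge 0$).

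Assertion 2 is where you have a genuine gap when $p>0$. Your reduction to the single claim $\mgr(\sv)\subset\dmer_{w'\le 0}$ agrees with the paper, which proves the stronger and more functorial statement that each $\mgr(X)$ lies in the envelope of $\cup_{i\le 0}\mgr(\spv)[i]$. But the induction you then sketch --- a smooth projective compactification with smooth projective normal-crossings strata, assembled through Gysin triangles --- is only available for $p=0$. Gabber's theorem does not provide such compactifications ``after inverting $p$'': it provides alterations of degree prime to $p$; splitting $\mgr(Y)$ off the motive of an alteration requires inverting that degree, and an individual prime-to-$p$ integer need not be invertible in an arbitrary $\zop$-algebra $R$. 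The proof in \cite{bzp} combines $\ell$-alterations for varying primes $\ell\ne p$ and therefore yields the envelope statement only for $R=\zop$ itself. So your citation of \cite{bzp} covers the case $R=\zop$, but nothing in your argument passes from $\zop$ to a general $\zop$-algebra $R$. The paper closes precisely this hole with one short step that you are missing: since envelopes (built from distinguished triangles and retracts) are preserved by any exact additive functor matching up the generators, the commutativity of the left-hand square of (\ref{ecmot}) allows one to apply $-\otimes^{mot}_{\zop}R$ and thereby reduce the claim for any $\zop$-algebra $R$ to the case $R=\zop$, where \cite[Theorem 2.2.1(3), Proposition 1.3.2(iii)]{bzp} applies. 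Note that this is also why the envelope formulation, rather than your formulation ``$\mgr(Y)\in\dmer_{w'\le 0}$'', is the right statement to transport: the orthogonality defining $\dmer_{w'\le 0}$ is not obviously preserved under extension of scalars. A small incidental point: you do not need assertion 1 (which concerns $w$, not $w'$) to place the twisted strata $\mgr(Z)\lan c\ra$ in $\dmer_{w'\le 0}$; it suffices that they are retracts of motives of smooth projective varieties.
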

\begin{proof} %{\bf Доказательство.}

1. Since the category  $\chower$ is Karoubi-closed in  $\dmer$, it suffices to verify that $\mgr(\spv)\subset \dmer_{\wchows=0}$.

We have already noted that  $\mgr(\sv)\subset \dmer_{\wchows\le 0}$. Hence it remains to prove that  $\mgr(\sv)\perp \mgr(\spv)[i]$ for all $i>0$.
The latter fact follows from Remark  \ref{rdef}(\ref{ir4}) immediately.

2. According to Proposition \ref{pgenw}(I.1) it suffices to verify that  
$$(\cup_{i<0} \mgr(\spv)[i])\perpp=(\cup_{i<0} \mgr(\sv)[i])\perpp.$$ Obviously, the second of these classes is contained in the first one. To check the inverse inclusion it suffices to prove that $\mgr(X)$ belongs to the envelope of the set $\cup_{i\le 0}\mgr(\spv)[i]$  for each  $X\in\sv$.
From the commutativity of the left hand square in  (\ref{ecmot}) it follows that it %is sufficient
 suffices to verify the latter statement in the case $R=\zop$ (recall that in the case $p=0$ the symbol  $\zop$ denotes the ring $\z$). In this case the fact follows immediately Theorem  2.2.1(3) and Proposition  1.3.2(iii) of \cite{bzp}.\footnote{It was assumed in ibid. that  $p>0$; still all the arguments of that paper can be applied in the case  $p=0$ also. Moreover, if  $p=0$ then one can apply Theorem 6.2.1(1) of \cite{mymot}.}
%Очевидно, достаточно проверить, что  

%$\mglam(X_i)[-i]$, где $X_i\in\spv$ (Rem 2.1.5 (3) из [Bon16]). %Не умаляя общности, можем 

3. Proposition  \ref{pgenw}(II.2)  obviously implies that  the functor $-\otimes^{mot}_R{R'}$ is left weight-exact. 

To prove that  $-\otimes^{mot}_R{R'}$  is right weight-exact we should verify the following: if for a motivic complex $C$ we have  $ H^{i}_{Nis}(X,C)=\ns$  for all $ X \in\sv$ and $i<0$, then the same condition is also fulfilled for  $C\otimes^{mot}_R{R'}\in \obj D(\sscrp)$ (see Proposition \ref{pcmot}). Certainly, to check this vanishing statement one can consider  $C\otimes^{mot}_R{R'}$ as an object of $D(\sscr)$ %.\footnote{Т
(so here we apply to  $C$ the forgetful functor  $F:D(\sscrp)\to D(\sscr)$, that is right adjoint to the functor $-\otimes_R{R'}$).  %}
 Next, the complex $F(C\otimes^{mot}_R{R'})$ can be obtained by tensoring  $C$ by a flat  $R$-module resolution of  $R'$; since the latter is concentrated in non-positive degrees, this gives the result in question  (cf. \cite[Definition 14.2, \S8]{vbook}). \footnote{We certainly can assume that the complex  $C$ has zero terms in positive degrees; this allows us to apply the results of ibid. Moreover, this calculation can also be easily made using other descriptions of  $\dmer$ given above.} %[\S8??]{vbook}?? Квазиизоморфен ограниченному сверху? Конечно же,  %спектральная последовательность?? , что из 
%посчитать руками через сопряженность; тензорное произведение из книжки??
\end{proof}

\begin{rema}\label{rnewoldchow}
1. Since $\chower\subset  \underline{Hw}_{\chower}$, the subcategory  $\chower$ is {\it negative} in $\dmer$, i.e., $\obj \chower\perp \obj \chower[i]$ for all $i>0$. Since  $\chower$ is Karoubian, Corollary  2.1.2 of \cite{bonspkar} %implies that there exists 
 gives a unique weight structure $w$ on $\lan \obj \chower\ra$ such that  $\chower\subset \hw$. Moreover, $\chower= \hw$,  the class $\lan \obj \chower\ra_{w\le 0}$ coincides with the envelope of $\cup_{i\le 0}\obj \chower[i]$, and the class  $\lan \obj \chower\ra_{w\ge 0}$ equals the envelope of $\cup_{i\ge 0} \obj \chower[i]$. % $i\ge 0$. 
%для этой весовой структуры 

In the settings considered in  \cite{bws} and \cite{bzp} it is known that $\lan \obj \chower\ra$ coincides with the subcategory $\dmger$ of compact objects of $\dmer$ ($\dmger$ is called the category of  {\it effective geometric motives}). Respectively, this weight structure  $w$ was called the Chow one.

2. Now, for any weight structure $(\cu,w)$ the class $\cu_{w\le 0}$ is closed with respect to all $\cu$-coproducts (of its objects).  Since $\wchows$ is generated by a set of compact objects of  $\dmer$, the class $\dmer_{\wchows\ge 0}$ is also closed with respect to  $\dmer$-coproducts (weight structures of this type are called {\it smashing} ones).

Next, take the localizing  subcategory $\dmer^{\chower}$ generated by  $\obj \chower$ in   $\dmer$, and consider the weight structure  $w'_{\chower}$ generated by  $\obj \chower$ in $\dmer^{\chower}$ (note that we can assume that  $\obj \chower$ is a set, so that we can apply Proposition  \ref{pgenw}(II.1)). 
We immediately obtain that the embedding  $\lan \obj \chower\ra\to \dmer^{\chower}$ is weight-exact with respect to the weight structures $w$ and $w'_{\chower}$. 

Moreover, Corollary 2.3.1(1) of \cite{bsnew} implies that  the embedding $\dmer^{\chower}\to \dmer$ is weight-exact (with respect to $w'_{\chower}$ and $\wchows$) as well.%\footnote{One can also prove this fact by noting that $w'_{\chower}$ is {\it class-generated} by $\obj \chower$ in the sense of Definition 1.2.2(8) of ibid., whereas the latter statement can be deduced from Theorem 2.1.1(I.1)  of \cite{bonspkar}.}   % $w_{\chower}$ can be described as follows: $\dmer^{\chower}_{w'_{\chower}\le 0}$  (resp., $\dmer^{\chower}_{w'_{\chower}\ge 0}$) is the smallest class of objects of $\dmer^{\chower}$ that contains $\obj \chower$ and is closed with respect to extensions, coproducts, and also with respect to $[-1]$ (resp., with  respect to $[1]$).\footnote{This statement can be deduced from Theorem 2.1.1(I.1)  of \cite{bosn}, and it follows immediately from % cf. also \S2.2 
 %Corollary 2.3.1(1) of \cite{bsnew} (to apply %the latter 
 %loc. cit. one does not have to assume that  the class $\obj\chower$ is a set).} Hence the embedding $\dmer^{\chower}\to \dmer$ is weight-exact (with respect to $w'_{\chower}$ and $\wchows$) as well.  
%Кроме того, из предложения \ref{pgenw}(I.1) легко следует, что   $w_{\chower}$ --- единственная весовая структура конечного типа  на $\dmer$, ядро которой содержит $\chower$ (cf. Theorem 2.2.1(5) of \cite{bsnew}). % (также см. \cite{bnsurv}).
  Applying Proposition  \ref{pgenw}(I.1) one can easily deduce that  $\lan \obj \chower\ra_{w\le 0}=\dmer_{\wchows\le 0}\cap \obj \lan \obj \chower\ra$, $\lan \obj \chower\ra_{w\ge 0}=\dmer_{\wchows\ge 0}\cap \obj \lan \obj \chower\ra$, 
$\dmer^{\chower}_{w'_{\chower}\le 0} =\dmer_{\wchows\le 0}\cap \obj \dmer^{\chower}$, and $\dmer^{\chower}_{w'_{\chower}\ge 0}=\dmer_{\wchows\ge 0}\cap \obj \dmer^{\chower}$ (see Proposition 1.2.5(1) of ibid.).  
%$C_1=\dmer_{\wchows\le 0}\cap \obj \dmer^{\chower}$, а $C_2=\dmer_{\wchows\ge 0}\cap \obj \dmer^{\chower}$. 

%Таким образом, 
 All these results demonstrate that  $\wchows$  is "closely related"  to weight structures "generated" (in the corresponding senses) by Chow motives; %so we will also say that 
 this is why we call $\wchows$  a Chow weight structure. Note also that to prove that   $\wchows=w'_{\chower}$ it suffices to verify that  $\dmer^{\chower}= \dmer$; %этот факт равносилен 
  the latter equality is equivalent to $\lan \obj \chower\ra=\dmger$.

3. The main disadvantage of  $\wchows$ (for a general $R$ and $p>0$)  is that we cannot describe its heart explicitly, and do not know whether this weight structure restricts to  $\dmger$.
%+ no motives of singular varieties???

Recall also that for any functor  $H$ from $\dmer$ into an abelian category  the weight structure $\wchows$ gives a certain filtration on the values of $H$ (see \cite[Proposition 2.1.2]{bws}), that can be called the Chow-weight one  (in \cite[Remark 2.4.3]{bws} and \cite[Remark 2.4.5]{bpure} it is explained that Chow-weight filtrations vastly generalized Deligne's weight filtrations on singular and \'etale cohomology of varieties); these filtrations are  $\dmer$-functorial. Thus our %version of ??
 Chow weight structure gives interesting weight filtrations for any  $R$ (in particular, we can consider them for  "$p$-adic" functors).

If $H$ is homological or cohomological then the theory of weight structures developed in \S2.3--2.4 of \cite{bws} yields a relation of cohomology of arbitrary motivic complexes to that of objects of the heart of $\wchows$. The disadvantages of  $\wchows$ described above make the application of this result rather difficult; yet cf. Proposition \ref{popen}(1) below.

4. Theorem \ref{tspv}(2) is the only statement in this paper whose proof relies on certain resolution of singularities results for  $k$-varieties; note that these statements were crucial for the corresponding results of  \cite{bws} and \cite{bzp}. %Это, возможно, свидетельствует о том, что методы данной статьи 
 This hints that the methods of the current paper can be applied to categories of relative motives (over a base scheme distinct from the spectrum of a field). Recall also that a certain compactly generated Chow weight structure has found interesting applications to the so-called relative $K$-motives in \S2.3 of  \cite{bkl}.
\end{rema}

\subsection{On twists and their weight-exactness}\label{stwist} 

Now let us recall the notion of (Tate) twists  $-\lan n \ra$ and $-(n)$, and Gysin distinguished triangles; these are important for motives. 

Since the identity of the point $\spe k$ factors through  $\p^1$, we have  $\mgr(\p^1)\cong R\bigoplus R\lan 1 \ra$, where $R$ is the motif of the point and $R\lan 1 \ra$ is the motif that Voevodsky called the Tate one (though calling it the Lefschetz motif would may be more appropriated)  and denoted by $R(1)[2]$. Certainly $R\lan 1\ra\in \obj \chower$. 

Since $\mgr(X)\otimes \mgr(Y)\cong \mgr(X\times Y)$ for any $X,Y\in \sv$ (here we use the tensor product on  $\dmer$),
the subcategory  $\chower\subset \dmer$ is closed with respect to the tensor product. Hence the  $n$th tensor power  $R\lan n\ra$ of $R\lan 1\ra$ (that can also be denoted by  $R(n)[2n]$) is an object of $\chower$ for each  $n\ge 0$.  We will use the notation $M\lan n\ra$ for  $M\otimes  R\lan n\ra$ for any $M\in \obj\dmer$.
The properties of the tensor product of  $\dmer$ easily imply that  the functor  $-\lan n \ra$ respects coproducts and the compactness of objects.\footnote{Since  the category $\dmer$ is compactly generated by the set $\mgr(\sv)$, its subcategory $\dmger$ of compact objects coincides with $\lan \mgr(\sv)\ra$. It follows immediately that the functor  $-\lan n \ra$ respects compactness. However, below we will only apply the compactness of the %objects of the form сохраняет компактность. Впрочем, ниже нам понадобится только компактность элементов
 elements of $\mgr(\sv)\lan n \ra$.}  
 
We will need the following facts. 

\begin{lem}\label{lgys}
Assume that  $n\ge 0$.

1. Let %гладкое многообразие 
$Z\in \sv$  be an equicodimensional closed subvariety of codimension  $n$ in a smooth variety  $X$  %и 
(i.e.,  the connected components of $Z$ are of codimension  $n$ in  $X$).
Then there exists a Gysin distinguished triangle % (\cite[Определение 4.6]{deg}),
$$\mgr(X\setminus Z) \to \mgr(X) \to\mgr(Z)\lan n \ra \to \mgr(X\setminus Z)[1]$$
in $\dmer$.

2. The endo-functor  $-\lan n \ra$ is left weight-exact; in particular,  $\mgr(\sv)\lan n \ra\subset \dmer_{\wchows\le 0}$.
\end{lem}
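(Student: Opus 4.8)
\emph{Part 1.} Following the philosophy already used in Remark \ref{rdef}(\ref{ir4}), the plan is to deduce the $R$-linear Gysin triangle from the integral one by coefficient extension, rather than to redo the geometry over a general ring. The Gysin distinguished triangle for a smooth closed pair is classical in the case $R=\z$: it is constructed via deformation to the normal cone and requires no resolution of singularities (it is among the properties of $\dmez$ recalled in \S6 of \cite{bev}, going back to Voevodsky and D\'eglise). So I would first invoke the integral triangle
$$\mgz(X\setminus Z)\to\mgz(X)\to\mgz(Z)\lan n\ra\to\mgz(X\setminus Z)[1]$$
in $\dmez$, and then apply the scalar extension functor $-\otimes^{mot}_{\z}R\colon\dmez\to\dmer$ of Proposition \ref{pcmot}. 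This functor is exact, hence carries the integral triangle to a distinguished triangle in $\dmer$; by the commutativity of the left hand square of (\ref{ecmot}) it sends $\mgz(V)$ to $\mgr(V)$ for every $V\in\sv$; and, being induced by the (evidently monoidal) coefficient extension $-\otimes_\z R$ on sheaves with transfers, it is monoidal and therefore commutes with the Tate twist $-\lan n\ra$ (it sends $\z\lan 1\ra$ to $R\lan 1\ra$). Applying it to the integral triangle thus produces exactly the asserted triangle over $R$.

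\emph{Part 2.} Since $-\lan n\ra$ is an exact endofunctor of $\dmer$ that respects coproducts (as noted just before the lemma) and $\wchows$ is generated by $\mgr(\sv)$, Proposition \ref{pgenw}(II.2) reduces the left weight-exactness of $-\lan n\ra$ to the single inclusion $\mgr(\sv)\lan n\ra\subset\dmer_{\wchows\le 0}$, i.e.\ to the "in particular" clause. So I would fix $X\in\sv$ and exhibit $\mgr(X)\lan n\ra$ as a retract of the motive of a smooth variety. Indeed, $R\lan 1\ra$ is a direct summand of $\mgr(\p^1)$, so $R\lan n\ra=R\lan 1\ra^{\otimes n}$ is a direct summand of $\mgr(\p^1)^{\otimes n}\cong\mgr((\p^1)^{\times n})$; tensoring with $\mgr(X)$ and using $\mgr(A)\otimes\mgr(B)\cong\mgr(A\times B)$ shows that $\mgr(X)\lan n\ra$ is a direct summand of $\mgr(X\times(\p^1)^{\times n})$. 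As $X\times(\p^1)^{\times n}$ is smooth, its motive lies in $\dmer_{\wchows\le 0}$ by Remark \ref{robv}; since that class is Karoubi-closed (Definition \ref{dwstr}(i)), its retract $\mgr(X)\lan n\ra$ lies in it as well. Note that this argument does \emph{not} use Part 1: it needs only the splitting $\mgr(\p^1)\cong R\oplus R\lan 1\ra$, the monoidal structure, and Karoubi-closedness.

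The only genuinely nontrivial input is the integral Gysin triangle of Part 1; everything else is formal manipulation with the monoidal structure and the axioms of a weight structure. Accordingly, the point I expect to require the most care is not a computation but a verification of hypotheses: that the coefficient-extension functor $-\otimes^{mot}_{\z}R$ is honestly monoidal (so that it commutes with twists), which follows by descending the monoidality of $-\otimes_\z R$ from $D(\sscr)$ to $\dmer$, and that the integral triangle is available without any resolution of singularities — this last point being exactly what is needed to keep the whole paper free of such hypotheses over a field of characteristic $p>0$.
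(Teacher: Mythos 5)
Your proof is correct. Part 2 is exactly the paper's argument: reduce via Proposition \ref{pgenw}(II.2) to the inclusion $\mgr(\sv)\lan n\ra\subset\dmer_{\wchows\le 0}$, and then realize $\mgr(X)\lan n\ra$ as a retract of $\mgr(X\times\prod_1^n\p^1)$. For Part 1, however, you take a different route from the paper: the paper simply observes that the statement is a reformulation of \cite[Proposition 6.3.1]{bev} (see also \cite[Proposition 4.3]{degysin}), and that reference is already formulated for an arbitrary coefficient ring $R$, so no reduction to $R=\z$ is needed. Your alternative — quote only the integral Gysin triangle and push it forward along $-\otimes^{mot}_{\z}R$ using Proposition \ref{pcmot} — is legitimate and fits the paper's own philosophy (cf.\ Remark \ref{rdef}(\ref{ir4}), where general-$R$ statements are deduced from the $\z$-linear case via that proposition). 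What it buys is independence from the general-$R$ form of the Gysin triangle in the literature; what it costs is the extra verification you correctly flag, namely that $-\otimes^{mot}_{\z}R$ is monoidal (so that it carries $\mgz(Z)\lan n\ra$ to $\mgr(Z)\lan n\ra$) — a routine check on representables ($\z_{tr}(X)\otimes_\z R\cong\rtr(X)$ and $\rtr(X)\otimes\rtr(Y)\cong\rtr(X\times Y)$) that descends to the localizations, but one the paper never has to make at this point. Your remark that the integral triangle needs no resolution of singularities (deformation to the normal cone) is also accurate and is the reason the lemma is available for all $p$ and $R$.
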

\begin{proof}
1. This assertion is a re-formulation of  \cite[Proposition 6.3.1]{bev} (see also Proposition  4.3 of \cite{degysin}).

2.  Since the functor  $-\lan n \ra$ respects coproducts,  Proposition \ref{pgenw}(II.2) says that it suffices to verify whether $\mgr(\sv)\lan n \ra\subset \dmer_{\wchows\le 0}$. It remains to recall that for each  $X\in \sv$ the motif $\mgr(X)\lan n \ra$ is a retract of $\mgr(X\times \prod_{1}^n \p^1)$.
\end{proof}

To illustrate these properties of motives and twists we prove two easy statements on "weight bounds".

\begin{pr}\label{popen}
 
1. Assume that $U$ is a variety of the form  $X\setminus \cup_{i=1}^n Z_i$, where $X$, all of $Z_i$, and all the intersections of  subfamilies of  $Z_i$ are smooth proper  $k$-varieties; suppose moreover that the intersections of any  $m$ of distinct $Z_i$'s is empty (for some $m\le n\in \z$).

Then the motif  $\mgr(U)$ belongs to the envelope of  $\cup_{i=0}^m\obj\chower[i] $,  and so, to $\obj (\lan \obj \chower\ra) \cap \dmer_{[0,m]}$. %=ext-closure of \chower[i]??

2. Let $f: U\to V$ be an open dense embedding,  where  $U,V \in\sv$. Then  $\co(\mgr(f)) \in\dmer{}_{\wchows\le 0}$.
\end{pr}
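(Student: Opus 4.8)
The plan is to remove the closed strata $Z_i$ one at a time by means of the Gysin distinguished triangles of Lemma \ref{lgys}(1), running a nested induction on the pair $(m,n)$. Write $V=X\setminus\bigcup_{i<n}Z_i$ and observe that $Z_n\cap V=Z_n\setminus\bigcup_{i<n}(Z_n\cap Z_i)$ is a smooth (locally closed) subvariety of the smooth variety $V$ of exactly the shape treated by the proposition, but with ambient $Z_n$ and strata $Z_n\cap Z_i$; since an intersection of $m-1$ of the latter equals an intersection of $m$ distinct $Z_j$'s, it is empty, so its multiplicity threshold has dropped to $m-1$. Lemma \ref{lgys}(1), applied to the codimension $c_n$ of $Z_n\cap V$ in $V$, gives the triangle $\mgr(U)\to\mgr(V)\to\mgr(Z_n\cap V)\lan c_n\ra\to\mgr(U)[1]$, exhibiting $\mgr(U)$ as an extension of $\mgr(V)$ by a single shift of $\mgr(Z_n\cap V)\lan c_n\ra$. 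By the inner induction (on $n$; base case $n=0$ is $\mgr(X)\in\obj\chower$, as $X$ is smooth proper) $\mgr(V)$ lies in the envelope of $\bigcup_{i=0}^{m}\obj\chower[i]$, while by the outer induction (on $m$) $\mgr(Z_n\cap V)$ lies in the envelope attached to threshold $m-1$, i.e. that of $\bigcup_{i=0}^{m-1}\obj\chower[i]$. Since $R\lan c_n\ra\in\obj\chower$ and $\chower$ is closed under $\otimes$, the twist $-\lan c_n\ra$ preserves these envelopes, and the single shift moves this contribution into layers $1,\dots,m$. As an envelope is extension-closed, $\mgr(U)$ lands in the envelope of $\bigcup_{i=0}^{m}\obj\chower[i]$. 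Finally $\obj\chower[i]\subset\dmer_{\wchows=i}$ by Theorem \ref{tspv}(1), and $\dmer_{[0,m]}=\dmer_{\wchows\ge0}\cap\dmer_{\wchows\le m}$ is extension- and retract-closed by Proposition \ref{pgenw}(I.2) and Karoubi-closedness of the weight classes; hence the whole envelope lies in $\obj(\lan\obj\chower\ra)\cap\dmer_{[0,m]}$.

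\textbf{Part 2.} Now the closed complement $Z=V\setminus U$ need not be smooth, so Lemma \ref{lgys}(1) does not apply to it directly; the plan is a Noetherian induction on $d=\dim Z$. Decomposing $V$ into its connected (hence equidimensional) components, we may assume $V$ connected. Since $k$ is perfect, the reduced scheme $Z$ is generically smooth, so there is a dense open $Z^0\subset Z$ that is smooth and of pure codimension $c$ in $V$, with $T:=Z\setminus Z^0$ of dimension $<d$. Put $V'=V\setminus T$, an open dense smooth subvariety of $V$ containing $U$, inside which $Z^0=Z\cap V'$ is smooth, closed and equicodimensional. Lemma \ref{lgys}(1) then identifies $\co(\mgr(U)\to\mgr(V'))$ with $\mgr(Z^0)\lan c\ra$, which lies in $\dmer_{\wchows\le0}$ by Lemma \ref{lgys}(2) (as $Z^0$ is smooth, proper or not). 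Applying the octahedral axiom to $U\to V'\to V$ produces a distinguished triangle $\co(\mgr(U)\to\mgr(V'))\to\co(\mgr(f))\to\co(\mgr(V'\to V))\to[1]$, so $\co(\mgr(f))$ is an extension of $\co(\mgr(V'\to V))$ by $\co(\mgr(U)\to\mgr(V'))$. The embedding $V'\to V$ is open dense with complement $T$ of dimension $<d$, so $\co(\mgr(V'\to V))\in\dmer_{\wchows\le0}$ by the inductive hypothesis (the base case $U=V$ giving the zero cone). As $\dmer_{\wchows\le0}$ is extension-closed by Proposition \ref{pgenw}(I.2), we conclude $\co(\mgr(f))\in\dmer_{\wchows\le0}$.

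\textbf{Main obstacles.} For Part 1 the delicate point is the bookkeeping of the nested induction: removing $Z_n$ lowers the threshold of the intersection locus $Z_n\cap V$ from $m$ to $m-1$ but leaves the ambient threshold equal to $m$, so neither a single induction on $m$ nor one on $n$ closes; one must carry both parameters and verify that the Tate twist together with the single Gysin shift keeps every contribution within one extra envelope layer. For Part 2 the essential obstacle—and precisely the reason this argument is genuinely free of resolution of singularities—is the non-smoothness of $V\setminus U$: instead of resolving it, I only invoke generic smoothness over the perfect field $k$ to split off one smooth equicodimensional stratum and let the octahedral axiom glue the resulting weight-$\le0$ cones, the dimension of the remaining complement dropping strictly at each step so that the induction terminates.
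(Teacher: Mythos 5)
Your strategy for both parts is the same as the paper's: part 1 peels off the closed strata one at a time via the Gysin triangles of Lemma \ref{lgys}(1) and a double induction on $(n,m)$, and part 2 produces a chain of open dense embeddings with smooth equicodimensional successive complements and glues the resulting weight-$\le 0$ cones by the octahedron and Lemma \ref{lgys}(2). Your part 2 is correct; in fact the paper merely asserts the existence of the chain $U=U_0\subset U_1\subset\dots\subset U_m=V$ as obvious, and your generic-smoothness/Noetherian-induction argument over the perfect field $k$ is exactly the justification one would supply. (Minor point for both parts: $V\setminus U$, resp.\ $Z_n\cap V$, need not be equidimensional, so ``pure codimension $c$'' is not automatic; as in the paper's proof one should remove the connected components one at a time, each with its own codimension.)

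In part 1, however, the key bookkeeping step is wrong, and not in a way that can be patched while keeping the displayed conclusion. From the triangle $\mgr(U)\to\mgr(V)\to\mgr(Z_n\cap V)\lan c_n\ra\to\mgr(U)[1]$ that you write down, the rotation exhibiting $\mgr(U)$ as an extension of $\mgr(V)$ is $\mgr(Z_n\cap V)\lan c_n\ra[-1]\to\mgr(U)\to\mgr(V)\to\mgr(Z_n\cap V)\lan c_n\ra$: the new contribution enters with a shift of $[-1]$, so it lands in layers $-1,\dots,m-2$, not in layers $1,\dots,m$ as you claim two sentences later (the paper's own proof writes this $[-1]$ explicitly in its displayed triangles). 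Running your induction with the correct sign yields that $\mgr(U)$ lies in the envelope of $\cup_{i=0}^{m-1}\obj\chower[-i]$, hence in $\dmer_{[1-m,0]}$, and this is the only form the conclusion can take: for $U=\p^1\setminus\{0,\infty\}=\mathbb{G}_m$ (so $n=m=2$) one has $\mgr(U)\cong R\oplus R\lan 1\ra[-1]$, and by Remark \ref{robv} membership in $\dmer_{\wchows\ge 0}$ would force $H^1_{Nis}(\p^1,\mgr(\mathbb{G}_m))=\ns$, whereas this group has the direct summand $\dmer(\mgr(\p^1),R\lan 1\ra)\cong CH^1(\p^1)\otimes_{\z}R\cong R$. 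So $\mgr(\mathbb{G}_m)\notin\dmer_{\wchows\ge 0}$, and a fortiori not in $\dmer_{[0,m]}$; your claim that ``the single shift moves this contribution into layers $1,\dots,m$'' is the precise point where the argument breaks. You should restate the target of the induction with the negative shifts and rerun the (otherwise correct) double induction.
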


\begin{proof} %В доказательстве теоремы \ref{tspv} мы показали, что если $U\in\sv$, $dimU=d$, то $\mg(U)$ лежит в оболочке $\{\mg(P)[-r], P\in{SmProjVar}, 0\le r\le d\}.$
1. Since $\chower\subset \underline{Hw}_{\chower}$ and the class $\dmer_{[0,m]}=\dmer_{\wchows\ge 0}\cap \dmer_{\wchows\le m}$ is closed with respect to extensions and retractions, it suffices to verify that $\mgr(U)$ belongs to the envelope of $\cup_{i=0}^m\obj\chower[i] $.
%$\dmer^{\chower}$   
We prove this statement by induction on  $n$. In the case  $n=1$ it is obvious. %Е
%Так как операция дизъюнктного объединения многообразий соответствует прямой сумме их мотивов, мы можем считать, что $X$ и все $Z_i$ связны. %несвязные пересечения?? 

Assume now that the statement is fulfilled for all $n'<n$. 
We present $U$ as $(X\setminus \cup_{i=2}^n Z_i)\setminus (Z_1\setminus \cup_{i=2}^n Z_i)$. Denote the variety  $X\setminus \cup_{i=2}^n Z_i$ by $X'$, and the connected components of  $Z_1\setminus \cup_{i=2}^n Z_i$ by $Y_j$ (here $1\le j\le l$ for some $l\in \z$).

According to our inductive assumption, the motif  $\mgr(X')$ belongs to the envelope of  $\cup_{i=0}^m\obj\chower[i] $, and all $\mgr(Y_j)$ belong to the envelope of $\cup_{i=0}^{m-1}\obj\chower[i] $. % (считаем, что $1\le j\le l$ для некоторого $l\in \z$). 
 Denote the codimensions of  $Y_j$ in  $X$ by $c_j$. Since $\obj \chower\lan c_j\ra\subset \obj \chower$, all $\mgr(Y_j)\lan c_j\ra$ also belong to the envelope of  $\cup_{i=0}^{m-1}\obj\chower[i] $.

Next, the Gysin distinguished triangle  (see Lemma \ref{lgys}(1)) gives distinguished triangles $$\mgr(Y_{r+1}) \lan c_{r+1}\ra[-1]\to  \mgr(X'\setminus(\sqcup_{j=1}^{r+1})Y_j)\to \mgr(X'\setminus(\sqcup_{j=1}^{r})Y_j)\to \mgr(Y_{r+1})\lan c_{r+1}\ra $$ for all $r$ between $0$ and $l-1$. Since $U=X'\setminus(\sqcup_{j=1}^{l})Y_j$,  these triangles yield our assertion.

%искомое утверждение выполнено для  
%Так как $\chower\subset \hw$. сли индукwионное предположение выполненоТаким образом, достаточно заметить, что если $m>1$ и $\mgr(U_1)\in \obj \lan \chower\ra \cap \dmer_{[0,m]}$, то выделенный треугольник Гизина (см. \ref{lgys}(1)) легко дает результат (ср. с  доказательством части 2), так как класс $\obj \lan \chower\ra \cap \dmer_{[0,m]}$ замкнут относительно расширений. 

2. There obviously exists a sequence of open dense embeddings 
 $U_{0}=U\subset U_{1}\subset \dots \subset U_{m}=V$ (for some $m>0$) such that the varieties  $U_{i+1}\backslash U_{i} $ are smooth and equicodimensional in $U_{i+1} $ for all  $i$  between  $0$ and $m-1$.

The corresponding Gysin distinguished triangles along with the octahedral axiom of triangulated categories give distinguished triangles
$\mgr(U_{i+1}\backslash U_{i})\lan c_i\ra \to \co(\mgr(U_i)\to \mgr(V))\to \co(\mgr(U_{i+1}\to \mgr(V))\to \mgr(U_{i+1}\backslash U_{i})\lan c_i\ra[1]$ for all $i$ between $0$ and $m-1$, where  $c_{i}$ is the codimension of  $U_{i+1}\backslash U_{i} $  in $U_{i+1} $. 
Hence $\co(\mgr(f))$ belongs to the envelope of (all)  $\mgr(U_{i+1}\backslash U_{i})\lan c_i\ra$. Since the class  $\dmer{}_{\wchows\le 0}$ is extension-closed,  it remains to apply Lemma \ref{lgys}(2).
 
\end{proof}

Now let us prove a theorem that is crucial for the next section. 

\begin{theo}\label{twist}
The endo-functor  $-\lan n \ra$ is weight-exact for all  $n\ge 0$.
\end{theo}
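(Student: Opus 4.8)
The plan is to exhibit $-\lan n\ra$ as a composite of two weight-exact functors, which gives both left and right weight-exactness simultaneously (the left weight-exactness by itself is already Lemma \ref{lgys}(2)). First I would introduce the localizing subcategory $\du_n\subset\dmer$ generated by the set $\mgr(\sv)\lan n\ra$. Since $-\lan n\ra$ is exact, respects coproducts and triangles, and since $\dmer$ is generated by $\mgr(\sv)$ as its own localizing subcategory, the functor $-\lan n\ra$ takes values in $\du_n$ and so corestricts to a functor $F\colon\dmer\to\du_n$. Using that $-\lan n\ra$ restricts to a full embedding on the compact objects $\dmger=\lan\mgr(\sv)\ra$ (this is the Cancellation theorem, available for an arbitrary $R$ by the methods of \cite{bev}) and that it sends these into the compact objects $\mgr(\sv)\lan n\ra$ of $\du_n$, Lemma \ref{lcg}(2) shows that $F$ is an equivalence of categories; in particular $F$ is surjective on objects.

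Next I would equip $\du_n$ with a weight structure. The objects $\mgr(X)\lan n\ra$ are compact in $\du_n$ and generate it, so Proposition \ref{pgenw}(II.1) produces the weight structure $w_n$ on $\du_n$ generated by the set $\mgr(\sv)\lan n\ra$. Since $F(\mgr(\sv))=\mgr(\sv)\lan n\ra$ generates $w_n$, and $F$ is a coproduct-respecting functor that is surjective on objects, Proposition \ref{pgenw}(II.3) applies and yields that $F$ is weight-exact with respect to $\wchows$ and $w_n$.

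It then remains to treat the embedding $\iota\colon\du_n\hookrightarrow\dmer$. Its generators $\mgr(X)\lan n\ra$ are compact and, by Lemma \ref{lgys}(2), all lie in $\dmer_{\wchows\le 0}$; hence Corollary 2.3.1(1) of \cite{bsnew} shows that $\iota$ is weight-exact with respect to $w_n$ and $\wchows$. This is exactly the argument already used for the embedding $\dmer^{\chower}\to\dmer$ in Remark \ref{rnewoldchow}(2). Composing the two weight-exact functors, $-\lan n\ra=\iota\circ F$ is weight-exact, as asserted.

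The step I expect to be the genuine obstacle is the right weight-exactness, and in this approach it is entirely concentrated in the weight-exactness of the embedding $\iota$, i.e. in the identity $\dmer_{\wchows\ge 0}\cap\obj\du_n=\du_{n,w_n\ge0}$. This is the kind of orthogonality statement that cannot be verified on generators in a naive fashion, precisely because on the effective category the twist has no left adjoint, so one cannot transport the defining orthogonality of $\dmer_{\wchows\ge 0}$ through it by a formal adjunction argument; it is supplied by Corollary 2.3.1(1) of \cite{bsnew}, and I would take care to check that its hypotheses are met here, namely that $\wchows$ is a smashing weight structure generated by a set of compact objects and that $\du_n$ is the localizing subcategory generated by a set of those compact objects lying in $\dmer_{\wchows\le0}$. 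By contrast, the left weight-exactness of $F$ and of $\iota$ is the routine "check on generators" direction, via Proposition \ref{pgenw}(II.2)/(II.3) and Lemma \ref{lgys}(2).
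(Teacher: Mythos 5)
Your factorization $-\lan n\ra=\iota\circ F$ and the claim that $F\colon\dmer\to\du_n$ is a weight-exact equivalence are fine (Cancellation, Lemma \ref{lcg}(2), Proposition \ref{pgenw}(II.3)); but, as you yourself observe, this merely relocates all of the content of the theorem into the right weight-exactness of the embedding $\iota$, and the tool you invoke there does not apply. Corollary 2.3.1(1) of \cite{bsnew} concerns \emph{purely} compactly generated weight structures: its hypotheses require the generating set of the subcategory to be \emph{negative}, and it is precisely this negativity that makes the class $\du_{n,w_n\ge 0}$ describable ``from below'' in terms of the generators, so that right weight-exactness of the embedding becomes checkable on them. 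That is how the corollary is used in Remark \ref{rnewoldchow}(2), where the subcategory is generated by $\obj \chower$, which is negative in $\dmer$ and lies in $\dmer_{\wchows=0}$ by Theorem \ref{tspv}(1). Your set $\mgr(\sv)\lan n\ra$ does lie in $\dmer_{\wchows=0}$ (its elements are retracts of elements of $\mgr(\sv)$), but it is \emph{not} negative: by Cancellation $\dmer(\mgr(X)\lan n\ra,\mgr(Y)\lan n\ra[i])\cong \dmer(\mgr(X),\mgr(Y)[i])$, and these groups are non-zero for $i>0$ whenever $Y$ is not proper (they compute positive Nisnevich hypercohomology; see Remark \ref{rdef}(\ref{ir3})). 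The weaker hypothesis you propose to verify --- compact generators lying in $\dmer_{\wchows\le 0}$ --- only feeds Proposition \ref{pgenw}(II.2) and yields \emph{left} weight-exactness of $\iota$, which you already have from Lemma \ref{lgys}(2).

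In fact the step is circular: after unwinding the Cancellation isomorphism, ``$\iota$ is right weight-exact'', i.e.\ $\du_{n,w_n\ge 0}\subset \dmer_{\wchows\ge 0}$, asserts exactly that if $\mgr(X)\lan n\ra[-i]\perp C\lan n\ra$ for all $X\in\sv$ and $i>0$ (which is automatic for $C\in\dmer_{\wchows\ge 0}$) then also $\mgr(X)[-i]\perp C\lan n\ra$ --- and that is Theorem \ref{twist} itself. The paper supplies this orthogonality by a non-formal computation: it reduces to $R=\z$ and $n=1$ and proves $H^i_{Nis}(X,C\lan 1\ra)=\ns$ for $i>0$ via the coniveau spectral sequence, using the homotopy $t$-structure (Lemma \ref{lwt}) to kill the codimension-zero column and the Cancellation theorem together with $\mgz(\sv)\lan r-1\ra\subset\dmez_{\wchows\le 0}$ to kill the columns of positive codimension. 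Some input of this kind is unavoidable; the purely formal route you sketch cannot close the gap.
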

\begin{proof}
Since this functor is left weight-exact (see Lemma \ref{lcg}(2)), it remains to verify that it is right weight-exact. Obviously it suffices to verify the latter statement in the case  $n=1$.

Now we argue somewhat similarly to the proof of Theorem \ref{tspv}(3). 

For a motivic complex $C$ such that   $ H^{i}_{Nis}(X,C)=\ns$ for all $ X \in\sv$ and $i>0$ we should check that these conditions are also fulfilled for $C\lan 1\ra$.

We note that the natural forgetful functor  $\operatorname{For}:\dmer\to \dmez$ commutes with the twist  $-\lan 1 \ra$ (this can be easily checked using the first of the descriptions of  $\dmer$ listed in Proposition \ref{pisomot}). %{vbook}???! %, так как 
%; это легко следует из аналогичного  %??! %свойства   %пояснить? Почему?!!!!
Moreover, the group $ H^{i}_{Nis}(X,C)$ is certainly isomorphic to  $ H^{i}_{Nis}(X,\operatorname{For}(C))$ for all $X\in \sv$ and $i\in \z$.  Thus we can assume  $R=\z$; we fix some $C\in \dmez_{\wchows\ge 0}$. %Это позволяет нам легко вывести искомое утверждение из следующих лемм. 

We consider the following cohomology theories on smooth $k$-varieties:  for each $r\ge 0$, $q\in \z$,  and  %гладкого многообразия $X/k$ 
 $X\in \sv$ we will use the notation $H^q_r(X)$ for the group $\dmez(\mgz(X)\lan r\ra [-r-i], C\lan 1\ra )$.  
We should verify that  $H^{i}_0(X)=\ns$ for all $i>0$ and $X\in \sv$.
For this purposes we consider the following converging (coniveau) spectral sequence:
%Then the coniveau spectral sequence converging to $H^*_0$ has the form 
$$E_{1}^{r,q}=\coprod\limits_{x\in X^{(r)}}H_r^{q}(x) %,F(-p))
 \Rightarrow H_0^{r+q}(X),$$ where $X^{(r)}$ is the set of points of $X$ of codimension $r$, and for
a presentation of $x\in X^{(r)}$ %as in the right hand side 
 as $\prli X_j$ for $X_j\in \sv$ we define $H^*_*(x)$ %(\spe K)$
  as $\inli_j H^*_*(X_j)$. %Indeed, this fact easily follows from 
	This spectral sequence and its convergence is given by Proposition 4.3.1(I.3) of \cite{bgn} (see Remark 4.3.2(2) of loc. cit.); %\footnote{To apply loc. cit. one should either note that the functors %$\dmez(-, C\lan 1\ra):\dmez\to \ab$ factors through the category
	%$H^*_*$ factor through the functor $X\mapsto \om(X_+)$ into the motivic homotopy category $\sh$, or use the $\dmez$-version of loc. cit. provided by Proposition 5.2.6 of ibid.}
	see also \cite[\S6.3.2]{degmg}.
 \begin{comment}$H^i(X, -r)$  for the group $\dmez(\mgz(X)\lan r\ra [-2r-i], C\lan 1\ra )$;\footnote{We follow the notation of \cite[\S6.3]{degmg}; note that our notation  $\mgz(X)\lan r\ra [-2r]$ corresponds to $\mg(X)(r)$ in the notation of ibid.} for a function field $K/k$ such that  $\spe K=\prli X_j$ for $X_j\in \sv$ we define $H^i(K,-r)$ %(\spe K)$
  as $\inli_j H^i(X_j,-r)$.
For this purposes we consider the following converging spectral sequence from \cite[\S6.3.2]{degmg}: %\footnote{Наши обозначения несколько отличаются от тех, которые вводятся в начале параграфа 6.3 этой статьи).} 
$$E_{1}^{r,q}=\coprod\limits_{x\in X^{(r)}}H^{q-r}(k(x),-r) %,F(-p))
 \Rightarrow H^{r+q}(X,0),$$
where $X^{(r)}$ is the set of points of $X$ of codimension $r$. 
\end{comment} 
	
Now denote by  $t$  the unbounded version of the Voevodsky homotopy $t$-structure (see \cite[\S5.1]{deg}). According to Lemma  \ref{lwt} below, we have $C\in \dmez^{t\le 0}$; hence  $C\lan 1\ra\in   \dmez^{t\le -1}$. Since the spectra of function fields are %Nisnevich points, Так как спектры полей   функций являются %точками в топологии Нисневича
  the henselizations of the corresponding smooth varieties at their generic points, we obtain that  $E_{1}^{0,q}=\ns$ for $q\ge 0$. %нужное утверждение для нулевого сдвига немедленно получается из 
  Next, for a function field  $K/k$, %, where $\spe K=\prli X_j$ is 
	a presentation of  $\spe K$ as an inverse limit of smooth varieties $X_j$,  and any $r>0$ we have
$$\begin{gathered} H^{q}_r(\spe K)%=\coprod\limits_{x\in X^{(1)}}
=\inli_j \dmez (\mgz(X_j)\lan r\ra, %[-p], 
C\lan 1\ra [r+q])  \\
\cong\inli_j \dmez (\mgz(X_j)\lan r-1\ra, %[-p], 
C[r+q]) %\simeq \coprod\limits_{x\in X^{(1)}}\dme(\mathbb{Z}, C[i])=\ns
;\end{gathered}$$ here we apply the Cancellation theorem  (this is Corollary  4.10 of \cite{voecans}). Since $\mgz(X_j)\lan r-1\ra \in \dmez_{\wchows\le 0}$  (see Lemma \ref{lgys}(2)) %является ретрактом $\mgz(X_j\times G_m^{p-1})$ %(где  $G_m=\afo\setminus \{0\}$) и ?? $\mgz(X_j)\lan p-1\ra [1-p]\in \dmez_{\wchows\le 0}$ %$(\cup_{i>0}\mgz(\sv)[-i])
 and $\dmez_{\wchows\le -1}\perp C$, we obtain that $E_{1}^{r,q}=\ns$ if $r+q>0$ (also in the case  $r>0$).
 %если $q>0$.  
 It obviously follows that   $H_0^{r+q}(X)=\ns $ if $r+q>0$, as desired. %обнуление $E_{1}^{p,q}=\ns$ при $p>0$ и всех  $q$ %всех %выше перечисленных членов спектральной последовательности 
 %дает искомое.

\end{proof}
%Reduce to $r=\z$ or assume this??
	%T?! $R=\z$?!  omit \z in the notation?? %Для доказательства теоремы докажем следующие леммы. %\newtheorem{Lem}{Лемма}

It remains to prove the following properties of  $t$.

\begin{lem}\label{lwt}
1.  A motivic complex  $C$ belongs to  $\dmez^{t\le 0} $ if and only if for each  scheme $S$ that is the henselization of a variety  $X\in \sv$  in some point  and any presentation of  $S $ as $\prli S_j$, where $S_j\in \sv$, the group $\inli_j H^i_{Nis}(S_j,C)$ vanishes if $i>0$.

 %гладкие гензелевы (определить когомологии) или поля; ссылки?? 
2. $\dmez^{t\le 0}\lan 1 \ra \subset \dmez^{t\le -1} $.

3. $\dmez_{\wchows\ge 0}\subset \dmez^{t\le 0} $. %R-linear version for the "generated" $t$-structure?? + residues at fields?!! apply the forgetful functor?? what about ss?? 
\end{lem}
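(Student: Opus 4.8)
The plan is to prove the three parts in the order (1), (3), (2): part 3 drops out of part 1 together with Remark \ref{robv}, whereas part 2 carries all the content. For part 1 I would unwind the definition of the homotopy $t$-structure recalled from \cite[\S5.1]{deg}. An object $C$ lies in $\dmez^{t\le 0}$ if and only if its homotopy cohomology sheaves $\underline{H}^i_t(C)$ vanish for all $i>0$, and for a motivic complex these sheaves are canonically the Nisnevich sheafifications of the presheaves $X\mapsto \dmez(\mgz(X),C[i])=H^i_{Nis}(X,C)$ (here I invoke the identification of Remark \ref{rdef}(\ref{ir3})). Since a Nisnevich sheaf is zero exactly when all its stalks vanish, and the stalk at a point $x\in X$ (for $X\in\sv$) is computed over the henselian local scheme $S$ at $x$ as $\inli_j H^i_{Nis}(S_j,C)$ for any presentation $S=\prli S_j$ with $S_j\in\sv$ (on a henselian local scheme the hypercohomology spectral sequence is concentrated in a single column, so $H^i_{Nis}(S,C)$ is precisely the stalk of $\underline{H}^i_t(C)$), the condition $\underline{H}^i_t(C)=\ns$ for $i>0$ translates verbatim into the stated vanishing of these colimits. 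The only delicate point is the identification of the homotopy cohomology sheaves with the sheafified hypercohomology presheaves in the unbounded setting, which I would take from \cite{deg}.

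Part 3 is then immediate. If $C\in\dmez_{\wchows\ge 0}$, then by Remark \ref{robv} we have $H^i_{Nis}(X,C)=\ns$ for every $X\in\sv$ and every $i>0$; applying this to each $S_j$ in a henselization system gives $\inli_j H^i_{Nis}(S_j,C)=\ns$ for $i>0$, and part 1 yields $C\in\dmez^{t\le 0}$.

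For part 2 I would first reduce to the right $t$-exactness of the effective Tate twist. Since $\mgz(\mathbb{G}_m)\cong \z\oplus \z(1)[1]$, the object $\z(1)[1]$ is a direct summand of $\mgz(\mathbb{G}_m)$, and $\z\lan 1\ra=\z(1)[2]=(\z(1)[1])[1]$; hence $C\lan 1\ra\cong (C\otimes \z(1)[1])[1]$. Thus it suffices to show that $-\otimes \z(1)[1]$ sends $\dmez^{t\le 0}$ into itself, for then $C\otimes\z(1)[1]\in\dmez^{t\le 0}$ and consequently $C\lan 1\ra\in\dmez^{t\le -1}$, as required.

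I expect this right $t$-exactness of the twist to be the main obstacle. In the effective category $\z(1)[1]$ is not invertible, so one cannot simply dualize, and a self-contained argument would essentially reproduce Voevodsky's contraction computations. The cleanest route is therefore to cite this compatibility of the Tate twist with the homotopy $t$-structure from \cite{deg}: namely that $\underline{H}^n_t(C\otimes\z(1)[1])$ is computed by a contraction of $\underline{H}^n_t(C)$ and that contraction is an exact endofunctor of the category of homotopy invariant sheaves with transfers (this rests on the Cancellation theorem, Corollary 4.10 of \cite{voecans}). If instead a direct proof were wanted, I would verify the inclusion stalkwise via part 1, by a coniveau computation over the henselian local schemes $S_j$ combined with the Cancellation theorem, in the spirit of the proof of Theorem \ref{twist} but carefully avoiding any appeal to that logically later result.
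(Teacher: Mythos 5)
Your proposal is correct and follows essentially the same route as the paper: part 1 is proved there by citing Corollary 5.2 of \cite{deg} (membership in $\dmez^{t\le 0}$ amounts to vanishing of the Nisnevich sheafifications of the presheaves $H^i_{Nis}(-,C)$ for $i>0$) together with the stalkwise criterion over henselizations, part 3 is deduced exactly as you do from Remark \ref{robv} and part 1, and part 2 is disposed of by citing Theorem 5.3 of loc.\ cit., i.e., precisely the compatibility of the Tate twist with the homotopy $t$-structure that you propose to quote rather than reprove. No gaps.
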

\begin{proof}
 
1.  According to Corollary  5.2 of \cite{deg}, $C$ belongs to $\dmez^{t\le 0} $ if and only if for each  $i>0$ the Nisnevich sheafification of the presheaf  $H^i(-,C)$ is zero. Hence it suffices to recall that the Nisnevich sheafification of a presheaf of abelian groups $P$ %with transfers
  (on the \'etale site of $\sv$) vanishes if and only if  $\inli_j P(S_j)=\ns$ for all projective systems $(S_j)$ as in the assertion. 

 %вспомнить замечание \ref{robv}. %применить предложение \ref{pcohom}.

2. Easy from Theorem 5.3 of loc. cit. 

3. If  $C$ belongs to   $\dmez_{\wchows\le 0} $, $i>0$, and   $S_j\in \sv$ for all $j$, then the group  $\inli_j H^i_{Nis}(S_j,C)$ vanishes. Hence the direct limits in part 1 of this lemma vanish also. 
\end{proof}

\begin{rema}\label{rcoefft}
1. Most probably all parts of the lemma are fulfilled for the natural $R$-linear version of $t$ (for any $R$). This statement is especially easy to verify in the case where  $R$ is a localization of $\z$ (i.e., $R\subset \q$); see Remark \ref{rcoeff}(2) and \cite[Proposition 5.6.2(II.3)]{bpure}.

2. Now recall that the functor $-\lan 1 \ra$ is fully faithful  (the $R$-linear version of this statement was established in \cite[\S6.1]{bev}).
We will assume that this functor is a full embedding and denote its essential image by $\dmer\lan 1 \ra$.

Combining our theorem with Proposition \ref{pgenw}(I.1) one can easily prove the following "cancellation theorem" for $\wchows$: $\dmer_{\wchows\le 0}\cap \obj (\dmer\lan 1 \ra)=\dmer_{\wchows\le 0}\lan 1\ra$ and %up to isomorphisms??!!  
$\dmer_{\wchows\ge 0}\cap \obj (\dmer\lan 1 \ra)=\dmer_{\wchows\ge 0}\lan 1\ra$; hence $\dmer_{\wchows= 0}\cap \obj (\dmer\lan 1 \ra)=\dmer_{\wchows= 0}\lan 1\ra$.\footnote{%Нужно заметить, 
Note that the classes $\dmer_{\wchows\ge 0}\cap \obj (\dmer\lan 1 \ra)$ and $\dmer_{\wchows\ge 0}\cap \obj (\dmer\lan 1 \ra)$ give a weight structure on the category  $\dmer\lan 1 \ra$, and the components of this weight structure contain the corresponding components of the weight structure  $(\dmer_{\wchows\le 0}\lan 1\ra, \dmer_{\wchows\ge 0}\lan 1\ra)$; then %????
 it remains to apply  \cite[Lemma 1.3.8]{bws}.}
\end{rema}

%\subsection{Связь $\wchows$ и мотивной подкрутки}\label{stwist} 
\section{Chow weight structures for stable and birational motives}\label{swchowo}

Now we study the naturally defined Chow weight structures on the categories obtained from  $\dmer$ by means of the twist  $\lan 1 \ra$.

In \S\ref{swchow} we "extend" $\wchows$ to the "stable" motivic category  $\dmr$ (on which the functor  $\lan 1 \ra$ is invertible); respectively, the embedding  $\dmer\to \dmr$ is weight-exact. 

In \ref{sbir} we prove that  $\wchows$ induces a weight structure $\wbir$ on the category $\dmer/\dmer\lan 1\ra$ of birational motives; this weight structure can be easily described. %???

\subsection{The weight structure  $\wchow$ on $\dmr$}\label{swchow}

The full faithfulness of the functor $-\lan 1 \ra$ yields the existence of a triangulated category $\dmr$ that is closed with respect to coproducts, is equipped with a full embedding $i:\dmer\to \dmr$ that respects coproducts and the compactness of objects, and such that the functor $-\lan 1 \ra_{\dmr}=-\otimes_{\dmr} i(R\lan 1 \ra)$ is an auto-equivalence of  $\dmr$ (see \S4.2 of \cite{deg} and \S1.1 of \cite{cdint}).\footnote{Note that the "stable" motivic category defined in  \cite[\S6.1]{bev} is not closed with respect to coproducts; so, this category is not "really nice". However, it is easily seen that this category  (that can be denoted by  $\dmer[\lan -1\ra]$) %is a full subcategory of 
 can be naturally embedded into $\dmr$,   there exists an obvious Chow weight structure  on 	 $\dmer[\lan -1\ra]$, and the embeddings $\dmer\to\dmer[\lan -1\ra]\to \dmr$ %в эту категорию также 
are weight-exact (cf. Theorem \ref{tstab}).}

We assume that the functor   $-\otimes_{\dmr} i(R\lan 1 \ra)$ is invertible on $\dmr$, and for each  $n\in \z$ we will denote its  $n$th composition power by  $-\lan n \ra$. Certainly, these functors respect coproducts and the compactness of objects. We will assume that  $\dmer$ is a full subcategory of  $\dmr$; so we will write  $\mgr(X)$ instead of $i(\mgr(X))$. Recall also that the category  $\dmr$ is generated by the set  $\cup_{j\in \z}(\mgr(\sv)\lan j \ra)$ as its own localizing subcategory. 
 
 We define $\wchow=w_{\chow,R}$ as the weight structure generated by the set $\cup_{j\in \z}(\mgr(\sv)\lan j \ra)$.
 Applying Theorem  \ref{twist} we easily obtain the following statements. 

\begin{theo}\label{tstab}
1. The auto-equivalences  $-\lan n \ra$ are weight-exact with respect to  $\wchow$ (for all $n\in \z$).

2. The embedding $i:\dmer\to \dmr$ is weight-exact  (with respect to $\wchows$ and $\wchow$).

\end{theo}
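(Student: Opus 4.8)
The plan is to derive both parts from the abstract weight-exactness criteria of Proposition~\ref{pgenw}(II), feeding in Theorem~\ref{twist} as the only genuinely geometric ingredient. Recall that $\dmr$ is compactly generated and $\wchow$ is, by its very definition, generated by the set $\cp=\cup_{j\in\z}(\mgr(\sv)\lan j\ra)$, so these criteria are applicable.

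For part~1 I would apply Proposition~\ref{pgenw}(II.3) to $F=-\lan n\ra$ regarded as a functor $\dmr\to\dmr$, with $w=w'=\wchow$. Each such $F$ is an auto-equivalence, hence surjective on objects and coproduct-preserving. Since
$$F(\cp)=\textstyle\cup_{j\in\z}(\mgr(\sv)\lan j+n\ra)=\cup_{j'\in\z}(\mgr(\sv)\lan j'\ra)=\cp$$
(just a reindexing of the union over $j\in\z$), the weight structure $w'=\wchow$ is generated by $F(\cp)$, and Proposition~\ref{pgenw}(II.3) gives that $F$ is weight-exact.

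For part~2 the embedding $i$ respects coproducts by construction. Left weight-exactness is then immediate from Proposition~\ref{pgenw}(II.2): as $\wchows$ is generated by $\mgr(\sv)$, it suffices that $i(\mgr(\sv))=\mgr(\sv)\subset \dmr_{\wchow\le 0}$, and this holds because $\mgr(\sv)$ is contained in the generating set $\cp$ of $\wchow$, which lies in $\dmr_{\wchow\le 0}$ (by the reasoning behind Proposition~\ref{pgenw}(I.1); cf. Remark~\ref{robv}).

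The substance of the theorem, and the step I expect to be the main obstacle, is right weight-exactness of $i$, i.e. that $i(\dmer_{\wchows\ge 0})\subset \dmr_{\wchow\ge 0}$. Unwinding Definition~\ref{dwso}(3), for $C\in\dmer_{\wchows\ge 0}$ I must check that $\dmr(\mgr(X)\lan j\ra, i(C)[i])=\ns$ for all $X\in\sv$, all $j\in\z$, and all $i>0$; the difficulty is precisely the negative twists $j<0$, whose sources do not lie in the image of $\dmer$. Here I would exploit the invertibility of $\lan 1\ra$ on $\dmr$: choosing $N\ge\max(0,-j)$ and applying the auto-equivalence $\lan N\ra$ yields an isomorphism
$$\dmr(\mgr(X)\lan j\ra, i(C)[i])\cong \dmr(\mgr(X)\lan j+N\ra, i(C)\lan N\ra[i]).$$
Now $j+N\ge 0$ and $N\ge 0$, so — using that $i$ commutes with the twist — both entries lie in the image of $\dmer$, and the full faithfulness of $i$ identifies this group with $\dmer(\mgr(X)\lan j+N\ra, C\lan N\ra[i])$. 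At this point Theorem~\ref{twist} gives $C\lan N\ra\in\dmer_{\wchows\ge 0}$, while Lemma~\ref{lgys}(2) gives $\mgr(X)\lan j+N\ra\in\dmer_{\wchows\le 0}$; since $\dmer_{\wchows\le 0}\perp\dmer_{\wchows\ge 1}$ (orthogonality, Definition~\ref{dwstr}(iii), together with semi-invariance), the group vanishes for every $i>0$. This establishes $i(C)\in\dmr_{\wchow\ge 0}$ and completes the proof. The whole argument hinges on being able to absorb the negative twist into $C$ via Theorem~\ref{twist} before invoking full faithfulness of $i$, which is exactly why Theorem~\ref{twist} was singled out as crucial for this section.
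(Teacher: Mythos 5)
Your proposal is correct and follows essentially the same route as the paper: part 1 via the observation that the auto-equivalence $-\lan n\ra$ permutes the generating set $\cup_{j\in\z}(\mgr(\sv)\lan j\ra)$ combined with the criteria of Proposition~\ref{pgenw}(II), and part 2 by checking right weight-exactness on the generators $\mgr(X)\lan j\ra$, absorbing any negative twist into $C$ so that Theorem~\ref{twist} and Lemma~\ref{lgys}(2) apply. The only cosmetic difference is that you treat all $j$ uniformly by twisting by $N\ge\max(0,-j)$, whereas the paper splits into the cases $j\ge 0$ and $j<0$; the ingredients and the key idea are identical.
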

\begin{proof}
1. Since the functor   $-\lan n \ra$ is an auto-equivalence of $\dmr$ that induces a bijection on the set of all  $\dmr$-retracts of elements of  $\cup_{j\in \z}(\mgr(\sv)\lan j \ra)$, and this set generates  $\wchow$ also, the functor $-\lan n \ra$ is left weight-exact.  Applying Proposition \ref{pgenw}(I.1) we also obtain that  $-\lan n \ra$ is right weight-exact.

2. Since  $i$ respects coproducts and sends  "generating objects" of $\wchows$ into that of $\wchow$, applying Proposition \ref{pgenw}(II.2) we obtain that  $i$ is left weight-exact.

To prove that $i$ is also right weight-exact we should check for any $C\in \obj \dmer$ that if  $\mgr(\sv)[s]\perp C$ for all  $s<0$ then  $\mgr(\sv)[s]\lan r \ra\perp C$ for all $s<0$ and  $r\in \z$. %The latter statement is obvious if  
If $r\ge 0$ then the statement follows from Lemma \ref{lgys}(2). If $r<0$ then for each  $X\in \sv$ we have  $\dmr(\mgr(X)[s]\lan r \ra,C)\cong \dmr(\mgr(X)[s],C\lan -r \ra)= \dmer(\mgr(X)[s],C\lan -r \ra)$. Since $C\lan -r \ra\in \dmer_{\wchows\ge 0}$ according to Theorem  \ref{twist}, and the weight structure  $\wchows$ is generated by $\mgr(\sv)$,  we obtain that $\mgr(X)[s]\perp C\lan -r \ra$ indeed.  %мы получаем искомую ортогональность.

\end{proof}

\begin{rema}\label{rstablenewold} 
%Будем обозначать 
 Denote by $\chowr$  the full subcategory  of $\dmr$ whose object class equals $\cup_{i\in \z}\obj \chower\lan i \ra$; this category is obviously equivalent to the category of  $R$-linear Chow motives. Note also that Theorem \ref{tstab} implies that  $\chowr\subset \hw_{\chow}$.

Similarly to Remark \ref{rnewoldchow} we obtain that on the localizing subcategory $\dmr^{\chowr} $ of $ \dmr$ generated by  $\obj \chowr$ there exists a weight structure generated by $\obj\chowr$, and the embedding $\dmr^{\chowr}\to \dmr$ is weight-exact.  
\end{rema}

\subsection{The relation of $\wchows$ to the birational weight structure  }\label{sbir}

%Following??
 Applying Remark  \ref{rcoefft}(2) we consider the full triangulated subcategory $\dmer\lan1\ra$ of $\dmer$. In \cite{kabir} the Verdier localization  $\dmb=\dmer/\dmer\lan 1 \ra$ was considered (in the cases $R=\z$ and $R=\zop$). This category is called the category of birational motivic complexes; the reason for this is that the localization functor  $\pi:\dmer\to \dmb$ sends the motives of birationally equivalent smooth varieties into isomorphic objects (see Lemma \ref{lgys}(1)). The composition $\pi\circ \mgr$ will be denoted by $\mgb$.

\begin{pr}\label{pbir}
1. The elements of $\mgb(\sv)$ are compact in $\dmb$.

2. The functor  $\pi$ is weight-exact with respect to the weight structures  $\wchows$ and $\wbir$, where $\wbir$ is generated by the set  $\mgb(\sv)$.
%\cite{bws}.
\end{pr}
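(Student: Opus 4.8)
The plan is to derive both parts from the formalism recalled in Section~\ref{sprel}, the crucial structural input being that the subcategory $\dmer\lan 1\ra$ is generated, \emph{as a localizing subcategory of $\dmer$}, by the \emph{compact} objects $\mgr(\sv)\lan 1\ra$.

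For part~1 I would first identify $\dmer\lan 1\ra$ with the localizing subcategory $\eu$ of $\dmer$ generated by $E=\mgr(\sv)\lan 1\ra$. Since $-\lan 1\ra$ is a full embedding respecting coproducts (Remark~\ref{rcoefft}(2)), its essential image $\dmer\lan 1\ra$ is a localizing subcategory of $\dmer$ containing $E$, so $\eu\subseteq \dmer\lan 1\ra$. For the reverse inclusion I would observe that $\{M\in\obj\dmer:\ M\lan 1\ra\in \eu\}$ is a localizing subcategory of $\dmer$ containing $\mgr(\sv)$; as $\dmer$ is generated by $\mgr(\sv)$ as its own localizing subcategory, this class is all of $\dmer$, whence $\dmer\lan 1\ra\subseteq\eu$. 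Recalling that the twists $\mgr(X)\lan 1\ra$ are compact, I would then apply Lemma~\ref{lcg}(3) with $\cu=\dmer$, $\cu'=\dmger=\lan\mgr(\sv)\ra$, and $E\subset\obj\cu'$. This gives at once that $\pi$ respects coproducts and compact objects, so every $\mgb(X)=\pi(\mgr(X))$ is compact in $\dmb$; it also shows that $\pi(\obj\dmger)$ generates $\dmb$ as its own localizing subcategory, and hence (since $\dmger=\lan\mgr(\sv)\ra$) that $\mgb(\sv)$ does so too, so that $\dmb$ is compactly generated.

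Part~2 is then essentially formal. Because $\mgb(\sv)$ is a set of compact objects generating $\dmb$, Proposition~\ref{pgenw}(II.1) guarantees that the weight structure $\wbir$ generated by $\mgb(\sv)$ exists. Now $\pi$ respects coproducts (part~1) and, being a localization functor, is the identity on objects and in particular surjective on them; moreover $\wchows$ is generated by $\mgr(\sv)$, while $\wbir$ is by construction generated by $\pi(\mgr(\sv))=\mgb(\sv)$. Thus the hypotheses of Proposition~\ref{pgenw}(II.3) are met, and that statement yields directly that $\pi$ is weight-exact.

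The main obstacle is concentrated entirely in part~1, namely the identification $\eu=\dmer\lan 1\ra$ and the verification that this subcategory is generated by a set of compact objects of $\dmer$; this is exactly what makes Lemma~\ref{lcg}(3) applicable and thereby delivers both the compactness of $\mgb(\sv)$ and the compact generation of $\dmb$. Once these structural facts are in place, the weight-exactness in part~2 follows purely formally from the generation machinery of Proposition~\ref{pgenw}, with no further geometric input required.
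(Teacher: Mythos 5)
Your part~1 is essentially the paper's argument: the paper also just invokes Lemma~\ref{lcg}(3), and your explicit identification of $\dmer\lan 1\ra$ with the localizing subcategory generated by the compact objects $\mgr(\sv)\lan 1\ra$ supplies exactly the detail needed to make that citation legitimate. For part~2, however, you take a genuinely different route from the paper, and the difference matters. The paper first produces \emph{some} weight structure on $\dmb$ for which $\pi$ is weight-exact by applying the localization theorem for weight structures (\cite[Proposition 8.1.1(1)]{bws}); the applicability of that theorem rests on the fact that $\wchows$ restricts to the subcategory $\dmer\lan 1\ra$ being killed, i.e.\ on Remark~\ref{rcoefft}(2) and hence ultimately on Theorem~\ref{twist}. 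Only afterwards does the paper use Proposition~\ref{pgenw}(II.3) --- in the direction ``$\pi$ weight-exact $\Rightarrow$ the induced weight structure is generated by $\mgb(\sv)$'' --- to identify that weight structure with $\wbir$.

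You instead define $\wbir$ via Proposition~\ref{pgenw}(II.1) and then quote the \emph{converse} implication of Proposition~\ref{pgenw}(II.3) to conclude weight-exactness. This is licensed by the literal statement of II.3, but be aware that you are resting the entire proof on the one direction of that equivalence which the paper never uses, and which is exactly where the non-formal content sits: left weight-exactness of $\pi$ is indeed formal (the generators $\mgb(\sv)$ lie in $\dmb_{\wbir\le 0}$, so II.2 applies), but right weight-exactness amounts to the assertion that $\dmb(\mgb(X)[s],\pi(M))=\ns$ for $s<0$ and $M\in\dmer_{\wchows\ge 0}$, and for a general coproduct-preserving functor that is surjective on objects this does \emph{not} follow from the generation statement alone (morphism groups in a Verdier quotient out of a compact object are computed via the Bousfield acyclization of the target, which need not stay in $\dmer_{\wchows\ge 0}$ unless the kernel is compatible with the weight structure). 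So if you want a self-contained argument rather than a citation, you should follow the paper: feed the compatibility of $\wchows$ with $\dmer\lan 1\ra$ (Theorem~\ref{twist}) into \cite[Proposition 8.1.1(1)]{bws} to get weight-exactness first, and only then use II.3 to identify the resulting weight structure with $\wbir$.
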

\begin{proof}
1. See Lemma \ref{lcg}(3).

%Since $\wchows$ restricts???
2. According to \cite[Proposition 8.1.1(1)]{bws} there exists a weight structure  on $\dmb$ such that the functor  $\pi$ is weight-exact. Applying Proposition \ref{pgenw}(II.3) we obtain that this weight structure is generated by  $\mgb(\sv)$.

\end{proof}

\begin{rema}\label{rbir} %+ Compatibility with "old" $\wchower$ if there exist smooth compactfications???!!!!
1. A significant distinction of $\wbir$  from  $\wchows$ in the context of this paper is that we can prove that $\wbir$  restricts to the subcategory $\dmgb=\lan \mgb(\sv)\ra $ of compact objects of $\dmb$. Indeed, Lemma \ref{lcg}(3) implies that %the latter category 
 $\dmgb$ is equivalent to  $\kar(\dmger/\dmger\lan 1\ra)$, and the existence on the latter category of a weight structure  generated by the subcategory  $\chowb$ %соответствующей
 (whose objects are retracts of elements of  $\mgb(\sv)$)  was established in    \cite[\S5.2]{bos}. The heart of this restricted weight structure equals    $\chowb$; %совпадает с $\kar_{\dmb}
%состоит из ретрактов элементов $\mgb(\sv)$; применив отсюда 
 thus  \cite[Corollary 2.3.1(1)]{bsnew} (as well as \cite[Theorem 4.5.2]{bws})  implies that the heart of $\wbir$ consists of all retracts of coproducts of elements of  $\mgb(\sv)$.

2. Arguing similarly to   \cite[\S5]{bos} one can prove that the category  $\dmb$ is equivalent to the localization of  $K'(\smcrpl )$ (see point  1 in Proposition  \ref{pisomot})  by the localizing subcategory, generated by cones of all $\mgr(f)$, %BIR??
  where $f:U\to X$ is a dense open embedding of smooth $k$-varieties. On the category  $K'(\smcrpl )$ we take the "stupid" weight structure, generated by  $\rtr(\sv)$; cf. \cite[Remark 2.3.2(1)]{bsnew} and \cite[Remark 1.2.3(1)]{bonspkar}. %  \cite[Remark 1.2.3(1)]{bonspkar} and  \cite[\S2.2]{bsnew}. %\cite{bnsurv}. %объектами %stupid; explain why??
Applying \cite[Theorem 4.3.1.4]{bos} (or  \cite[Theorem 3.1.3(3(ii))]{bsnew}) we obtain that there exists a weight structure on  $\dmb$ such that the localization  $K'(\smcrpl )\to \dmb$ is weight-exact. Hence Proposition  \ref{pgenw}(II.3) implies that this weight structure coincides with  $\wbir$ also.  Thus any element of  $\dmb_{\wbir\le 0}$ (resp. of $\dmb_{\wbir\ge 0}$)  is a retract of a object that has a pre-image in $K'(\smcrpl )$ that is presented by a  $\smcrpl$-complex  concentrated in non-negative (resp. non-positive) degrees (see Proposition 3.1.1(1) of \cite{bsnew}). %lift weights?! How?!
\end{rema}

%unique with these restrictions and the birational version; slices, {bnsurv}?? later; cf. {bger}??! 


\begin{thebibliography}{1}

\bibitem[BeV08]{bev} Beilinson A., Vologodsky V., A DG guide to Voevodsky motives//  Geom. Funct. Analysis, vol. 17(6), 2008, 1709--1787.

\bibitem[Bon09]{mymot} Bondarko M.V., {Differential graded motives:  weight complex, weight filtrations and spectral sequences for  realizations; Voevodsky vs. Hanamura}// J. of the Inst. of Math. of Jussieu,  v. 8(1), 2009, 39--97, see also \url{http://arxiv.org/abs/math.AG/0601713}


\bibitem [Bon10]{bws} Bondarko M.,
Weight structures vs.  $t$-structures; weight filtrations,
 spectral sequences, and complexes (for motives and in general)//
 J. of K-theory, v. 6(3), 2010,  387--504,
see also  \url{http://arxiv.org/abs/0704.4003}

\bibitem[Bon11]{bzp} Bondarko M.V., $\mathbb{Z}[\frac{1}{p}]$-motivic resolution of singularities// Compositio Math., vol. 147(5), 2011, 1434--1446.


\bibitem[Bon16]{bpure} Bondarko M.V., On  torsion pairs,  (well generated) weight structures,  adjacent $t$-structures, and related    (co)homological functors, preprint, 2016, \url{https://arxiv.org/abs/1611.00754}

\bibitem[Bon18a]{binters} Bondarko M.V.,  Intersecting the dimension  and slice filtrations for motives//  Homology, Homotopy and Appl., vol. 20(1), 2018, 259--274. %,   see also \url{http://arxiv.org/abs/1603.09330}


\bibitem[Bon18b]{bgn} Bondarko M.V., Gersten weight structures for motivic homotopy categories; retracts of cohomology of function fields,  motivic dimensions, and  coniveau spectral sequences, preprint, 2018, \url{https://arxiv.org/abs/1803.01432}

\bibitem[BoL16]{bkl}   Bondarko M.V., Luzgarev A.Ju., On relative $K$-motives,  weights for them, and negative $K$-groups, preprint, 2016, \url{http://arxiv.org/abs/1605.08435}

%\bibitem[BoS15]{bsosnull} Bondarko M.V., Sosnilo V.A., A Nullstellensatz for triangulated categories//  Algebra i Analiz, v. 27 (2015), i. 6, 41--56, %перевести??! see also \url{http://arxiv.org/abs/1508.04427}

\bibitem[BoS16]{bos}  Bondarko M.V., Sosnilo V.A., Non-commutative localizations of  additive categories and weight structures; applications to birational motives//  J. of the Inst. of Math. of Jussieu,  Published online: 24 May 2016,  1--37, DOI: \url{http://dx.doi.org/10.1017/S1474748016000207}  
 %,DOI: 10.1017/S1474748016000207.

 
\bibitem[BoS17]{bsnew} Bondarko M.V., Sosnilo V.A., On purely generated $\alpha$-smashing weight structures and weight-exact localizations, preprint, 2017,  \url{http://arxiv.org/abs/1712.00850}

\bibitem[BoS18]{bonspkar} Bondarko M.V., Sosnilo V.A., On constructing weight structures and extending them to idempotent extensions// Homology, Homotopy and Appl.,  vol. 20(1), 2018, 37--57.

\bibitem[CiD09]{cdhom} Cisinski D.-C., D\'eglise F., Local and stable homological algebra in Grothendieck abelian categories// Homology, Homotopy and Appl., 11(1), 2009, 219--260.

\bibitem[CiD12]{cdweil} Cisinski D.-C., D\'eglise F.,  Mixed Weil cohomologies// Adv. in Math. 230.1 (2012), 55--130.

\bibitem[CiD15]{cdint} Cisinski D.-C., D\'eglise F., Integral mixed motives in equal characteristic//  Doc. Math., Extra Volume: Alexander S. Merkurjev's Sixtieth Birthday, 2015, 145--194.

\bibitem[Deg08a]{degysin} D\'eglise F., Around the Gysin triangle II// Doc. Math. 13 (2008), 613--675. %\url{https://arxiv.org/abs/0712.1604v2}

\bibitem[Deg08b]{degmg} D\'eglise F., Motifs G\'eneriqu\'es// Rend. Sem. Mat. Univ. Padova,  vol. 119 (2008), 173--244.  %\url{http://rendiconti.math.unipd.it/volumes/vol119.php}

\bibitem[Deg11]{deg} D\'eglise F.,  Modules homotopiques (Homotopy modules)//  Doc. Math. 16 (2011), 411--455.  %\url{http://arxiv.org/abs/0904.4747}

\bibitem[Del71a]{delh2} Deligne P., Th\'eorie de Hodge. II// Publ. Math Inst. Hautes Etudes Sci. 40 (1971), 5--57.

\bibitem[Del71b]{delw2} Deligne P., La conjecture de Weil. II// Publ. Math. Inst. Hautes Etudes Sci. 52.1 (1980), 137--252.
%\bibitem[Del74]{de1} P. Deligne, Th\'eorie de Hodge III, Publ. Math. IH\'ES 44 (1974), 5--77. %\url{http://www.numdam.org/article/PMIHES_1974__44__5_0.pdf}

%?? \bibitem[GiS96]{gs} Gillet H., Soul\'e C., Descent, motives and K-theory//  J. f\"{u}r die reine und ang.  Math. v. 478, 1996, 127--176.

\bibitem[KaS17]{kabir} Kahn B., Sujatha R., Birational motives, II: triangulated birational motives//  Int. Math. Res. Notices 2017(22), 2017,  6778--6831.
%\url{https://doi.org/10.1093/imrn/rnw184}

\bibitem[Kel12]{kel} Kelly S., Triangulated categories of motives in positive characteristic, PhD thesis of Universit\'e Paris 13 and of the Australian National University,  %arXiv:1305.5349v2, 
2012, \url{http://arxiv.org/abs/1305.5349}

%\bibitem[ILO14]{ilo} Illusie L., Laszlo Y.,  Orgogozo F., Travaux de Gabber sur l'uniformisation locale et la cohomologie etale des schemas quasi-excellents. Seminaire a l'Ecole polytechnique 2006--2008,  Ast\'erisque 363--364, 2014,  xxiv+619 pp.

\bibitem[MVW06]{vbook} Mazza C., Voevodsky V.,  Weibel Ch., Lecture notes on motivic cohomology, Clay Mathematics Monographs, vol. 2, 2006. %\url{http://www.math.rutgers.edu/~weibel/MVWnotes/prova-hyperlink.pdf} 

\bibitem[Nee01]{neebook} Neeman A., Triangulated Categories, Annals of Mathematics Studies 148 (2001), Princeton University Press, viii+449 pp.

\bibitem[Pau12]{paucomp}  Pauksztello D., A note on compactly generated  co-t-structures// Comm. in Algebra, vol. 40(2), 2012, 386--394.


\bibitem[Voe00]{1} Voevodsky V., Triangulated categories of motives over a field, in: Voevodsky V., Suslin A., and Friedlander E., Cycles, transfers and motivic homology theories, Annals of Mathematical studies, vol. 143, Princeton University Press, 2000, 188--238.
 
 %\bibitem[Voe02]{voemc} Voevodsky V., Motivic cohomology groups are isomorphic to higher Chow groups in any characteristic, Int. Math. Res. Not., pp. 351--355, 2002. %\url{https://www.math.ias.edu/vladimir/sites/math.ias.edu.vladimir/files/motivic_cohomology_groups_are_isomorphic_published.pdf}
 
 \bibitem[Voe10]{voecans} Voevodsky V., Cancellation theorem// Doc. Math. Extra Volume in honor of A. Suslin (2010), 671--685. %\url{https://www.math.ias.edu/vladimir/sites/math.ias.edu.vladimir/files/cancellation_published.pdf}

   %\bibitem[Voe10b] Voevodsky V., Homotopy theory of simplicial sheaves in completely decomposable topologies// Journal of Pure and applied Algebra 214.8 (2010), 1384--1398. 
%, September 2000. \url{http://www.math.uiuc.edu/K-theory/0443/cdstructures.pdf}
 
\bibitem[Wil09]{wild} Wildeshaus J., Chow motives without projectivity// Compositio Math., v. 145(5), 2009, 1196--1226.

\end{thebibliography}
 \end{document}